\newtheorem{theorem}{Theorem}[section]
\newtheorem{lemma}[theorem]{Lemma}
\def\Z{{\mbox{\rm\kern.25em
\vrule width.03em height0.57ex depth0ex
\kern.033em
\vrule width.03em height1.52ex depth-0.96ex \kern-.338em Z}}}
\def\R{{\mbox{\rm I\kern-.22em R}}}
\def\N{{\mbox{\rm I\kern-.22em N}}}
\def\supp{{\rm supp}}
\def\max{{\rm max}}
\def\sgn{{\rm sgn}}
\def\k{{\kappa}}
\def\S{{\cal{S}}}
\def\J{{\cal{J}}}
\def\dist{{\rm dist}}
\def\111{\gamma}
\def\be#1{\begin{equation}\label{#1}}
\def\bas{\begin{align*}}
\def\eas{\end{align*}}
\def\bi{\begin{itemize}}
\def\ei{\end{itemize}}
\newenvironment{proof}{\noindent {\bf Proof} }{\endprf\par}
\def \endprf{\hfill  {\vrule height6pt width6pt depth0pt}\medskip}
\def\emph#1{{\it #1}}
\title[Calder\'{o}n commutators and Cauchy integral II]{Calder\'{o}n commutators and the Cauchy integral on Lipschitz curves revisited II. Cauchy integral and generalizations}
\author{Camil Muscalu}
\address{Department of Mathematics, Cornell University, Ithaca, NY 14853}
\email{camil@@math.cornell.edu}
\begin{document}

\begin{abstract}
This article is the second in a series of three papers, whose scope is to give new proofs to the well known theorems of Calder\'{o}n, Coifman, McIntosh and Meyer \cite{calderon}, \cite{meyerc}, \cite{cmm}. Here we treat the case of
the Cauchy integral on Lipschitz curves and some of its generalizations.
\end{abstract}

\maketitle

\section{Introduction}

This paper is a continuation of \cite{camil} and it is the second of a three papers sequel.
Let $A$ be a Lipschitz function on the real line $\R$. It defines a Lipschitz curve $\Gamma$ in the complex plane, by the parametrization $x\rightarrow x+iA(x)$. The Cauchy integral associated with this curve is the 
singular integral operator $C_{\Gamma}$ given by 

\begin{equation}\label{1}
C_{\Gamma}f(x) := p.v. \int_{\R}\frac{f(y)}{(x-y) + i (A(x) - A(y))} dy.
\end{equation}

The goal of this article is to give a new proof to the well known theorem of Coifman, McIntosh and Meyer \cite{cmm}, which says that $C_{\Gamma}$ extends naturally as a linear bounded operator from
$L^p$ into $L^p$ for any $1<p<\infty$. Moreover, the method of proof will also allow us to obtain several new generalizations of this important theorem, which will be described in the last section of the paper.

As it is known, standard arguments reduce this problem to the problem of proving polynomial bounds for the associated Calder\'{o}n commutators, defined by

\begin{equation}\label{2}
C_d f(x):= p.v. \int_{\R} \frac{(A(x)-A(y))^d}{(x-y)^{d+1}} f(y) dy.
\end{equation}
More precisely, it is enough to prove that

\begin{equation}\label{3}
\|C_d f \|_p \leq C(d) \cdot C(p)\cdot \|f\|_p\cdot \|A'\|_{\infty}^d
\end{equation}
for any $f\in L^p$, where $C(d)$ grows at most polynomially in $d$.

Simple and standard calculations, similar to the ones in \cite{camil}, show that for $a:= A'$ and $f$ Schwartz functions, (\ref{2}) exists and can be rewritten as

\begin{equation}\label{4}
\int_{\R^{d+1}}
\left[\int_{[0,1]^d} \sgn(\xi +\alpha_1\xi_1 + ... + \alpha_d\xi_d) d\alpha_1\cdots d\alpha_d\right]
\widehat{f}(\xi)
\widehat{a}(\xi_1) ...
\widehat{a}(\xi_d)
e^{2\pi i x(\xi +\xi_1 + ... +\xi_d)}
d\xi
d\xi_1 ...
d\xi_d.
\end{equation}
As a consequence, $C_d$ can be seen as a $(d+1)$- linear operator. More specifically, it is given by the map

$$(f, g_1, ..., g_d) \rightarrow $$

$$\int_{\R^{d+1}}
\left[\int_{[0,1]^d} \sgn(\xi +\alpha_1\xi_1 + ... + \alpha_d\xi_d) d\alpha_1\cdots d\alpha_d\right]
\widehat{f}(\xi)
\widehat{g_1}(\xi_1) ...
\widehat{g_d}(\xi_d)
e^{2\pi i x(\xi +\xi_1 + ... +\xi_d)}
d\xi
d\xi_1 ...
d\xi_d.
$$

However, since its symbol 

\begin{equation}\label{symbol}
m_d(\xi, \xi_1, ..., \xi_d) : = \int_{[0,1]^d}\sgn(\xi +\alpha_1\xi_1 + ... + \alpha_d\xi_d)d\alpha_1 ... d\alpha_d
\end{equation}
 is not a classical
Marcinkiewicz-Mihlin-H\"{o}rmander symbol, there are no estimates for $C_d$ that can be easily passed to the multilinear theorem of Coifman and Meyer \cite{meyerc} and this is why proving (\ref{3}) even without polynomial bounds, is a more delicate
problem than an estimate on paraproducts. In \cite{camil}, we gave a new proof of (\ref{3}) in the particular case of the first Calder\'{o}n commutator $C_1$. The proof was based on the observation
that even though $m_1(\xi,\xi_1)$ is not a classical symbol, when {\it smoothly} restricted to Whitney squares (with respect to the origin) the Fourier coefficients of the corresponding functions decay at least quadratically. This fact, together with
the logarithmical bounds for the shifted Hardy-Littlewood maximal functions and Littlewood-Paley square functions (also proved in \cite{camil}), were enough to reduce the problem to a setting where the method developed 
in \cite{mptt:biparameter} and \cite{mptt:multiparameter} could be applied. It is on the other hand not difficult to realize, that even if one would assume that such quadratic estimates hold true in the general case of $m_d(\xi, \xi_1, ..., \xi_d)$, 
these observations alone would not be enough
to obtain (\ref{3}), since then one would end up summing $O(d)$ power series, which would finally generate an exponential upper bound of type $C^d$. The main new idea to obtain the desired polynomial bounds in (\ref{3}), is to realize that
instead of treating $m_d$ as a {\it whole multiplier} of $d+1$ variables, one can see it as being a multiple average of various {\it $m_1$ type} multipliers. In other words, throughout this paper, we will never need to go beyond the understanding
of the symbol of the first Calder\'{o}n commutator, to be able to obtain polynomial bounds for all the other commutators. This may seem surprising at the first glance, but could also be seen as an {\it explanation} of a somewhat similar observation of 
Verdera \cite{verdera}
who showed that in a certain sense, the Cauchy integral {\it is dominated} by the first Calder\'{o}n commutator.

Now, coming back to (\ref{3}), we will prove the following

\begin{theorem}\label{th1}
Let $1<p_1, ..., p_{d+1}\leq\infty$ and $1\leq p < \infty$ be so that $1/p_1 + ... +1/p_{d+1} = 1/p$. Denote by $l$ the number of indices $i$ for which $p_i \neq \infty$. Then, $C_d$ extends naturally as a $(d+1)$- linear operator
bounded from $L^{p_1}\times ... \times L^{p_{d+1}} \rightarrow L^p$ with an operatorial bound of type

\begin{equation}
C(d)\cdot C(l)\cdot C(p_1)\cdot ... \cdot C(p_{d+1})
\end{equation}
where $C(d)$ grows at most polynomially in $d$ and $C(p_i) = 1$ as long as $p_i = \infty$ for $1\leq i\leq d+1$.
\end{theorem}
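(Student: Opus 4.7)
\bigskip

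\noindent \textbf{Proof proposal.}
The plan is to exploit the averaging representation
\begin{equation*}
m_d(\xi,\xi_1,\ldots,\xi_d) \;=\; \int_{[0,1]^{d-1}} m_1\bigl(\xi+\alpha_2\xi_2+\cdots+\alpha_d\xi_d,\;\xi_1\bigr)\,d\alpha_2\cdots d\alpha_d,
\end{equation*}
which follows from (\ref{symbol}) by performing the $\alpha_1$--integration first; here $m_1(\eta,\xi_1):=\int_0^1 \sgn(\eta+\alpha_1\xi_1)\,d\alpha_1$ is the symbol of the first Calder\'{o}n commutator studied in \cite{camil}. For fixed $\vec\alpha=(\alpha_2,\ldots,\alpha_d)\in[0,1]^{d-1}$, let $C_d^{\vec\alpha}$ denote the $(d+1)$-linear operator obtained from (\ref{4}) by replacing $m_d$ with the integrand above. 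Since $[0,1]^{d-1}$ has measure one, by Minkowski's integral inequality (valid because $p\ge1$) it is enough to establish the claimed $L^{p_1}\times\cdots\times L^{p_{d+1}}\to L^p$ bound for $C_d^{\vec\alpha}$ \emph{uniformly} in $\vec\alpha$, with operatorial constant of the stated polynomial type.

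Next, to each $C_d^{\vec\alpha}$ I would apply the smooth Whitney decomposition with respect to the origin in the two variables $(\eta,\xi_1)$ that was developed in \cite{camil} for $m_1$. The quadratic decay of the Fourier coefficients on each Whitney piece produces a rapidly convergent expansion of $C_d^{\vec\alpha}$ as a sum of discrete model operators, built from wavepackets adapted to the $\xi$- and $\xi_1$-axes. The crucial point is that \emph{only one} $m_1$-type decomposition is performed, independently of $d$: the remaining frequency variables $\xi_2,\ldots,\xi_d$ appear only through the linear shift of the first argument of $m_1$, while the corresponding inputs $g_2,\ldots,g_d$ enter as modulations of the wavepackets on the $\xi$-axis. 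This is precisely the mechanism by which the exponential factor $C^d$, which would be produced by summing $O(d)$ independent Whitney power series, is avoided.

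The final step is to estimate each discrete model operator via the time--frequency tree machinery of \cite{mptt:biparameter} and \cite{mptt:multiparameter}, fed by the shifted Hardy--Littlewood maximal and Littlewood--Paley square function bounds of \cite{camil}, whose operator norms grow only logarithmically in the shift parameter. The main obstacle lies in the bookkeeping of the ``inert'' inputs $g_2,\ldots,g_d$: each of them triggers a stopping time argument on every scale of the wavepacket decomposition, and one must arrange these so that the contribution of each inert factor is bounded by a constant, with an additional factor that grows only polynomially in $d$ arising from the combinatorial summation over scales, shifts, and trees. The presence of $C(l)$ in the statement reflects the fact that inputs with $p_j=\infty$ need no stopping time at all and can be absorbed using $\|g_j\|_\infty$ directly. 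Once the uniform-in-$\vec\alpha$ estimate is secured, integration over $[0,1]^{d-1}$ concludes the proof of Theorem~\ref{th1}.
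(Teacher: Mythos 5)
Your opening move---writing $m_d$ as an average over $\vec\alpha\in[0,1]^{d-1}$ of $m_1(\xi+\alpha_2\xi_2+\cdots+\alpha_d\xi_d,\xi_1)$, freezing $\vec\alpha$, and then exploiting Minkowski---captures the central insight of the paper: one never needs to go beyond the analysis of the $m_1$-type symbol. The shifted maximal/square function bounds and the role of the $p_j=\infty$ inputs are also the right ingredients. However, the middle of your plan has a genuine gap.

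The step ``apply the Whitney decomposition in $(\eta,\xi_1)$ to $C_d^{\vec\alpha}$ and obtain discrete model operators'' does not work as stated, because a Whitney decomposition in two linear functionals of the frequency variables alone does not localize the individual frequencies $\xi,\xi_2,\ldots,\xi_{d+1}$. Without such localization, the inputs $g_2,\ldots,g_{d+1}$ are never convolved with scale-adapted bumps, so neither the perfect $L^\infty$ estimate (\ref{unu}) nor the discrete model structure (\ref{12}) can be reached; one ends up with expressions like $f*|\Phi_r|$ (no cancellation in $\xi$) for which the square-sum over scales $r$ diverges. The paper's resolution is to \emph{first} insert a full Littlewood--Paley decomposition of unity in all $d+2$ variables as in (\ref{14})--(\ref{15}), and then exploit the constraint $\xi+\xi_1+\cdots+\xi_{d+1}=0$ together with the fact $\widehat{\Psi}(\gamma)=\gamma^2\widehat{\phi}(\gamma)$ (see (\ref{17})) to manufacture a \emph{second} $\Psi$-type bump at some other position. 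This is what produces the $O(d^2)$ paraproduct-like pieces and is the actual source of the polynomial $C(d)$ in the statement---something your proposal does not account for, since the averaging step itself is $d$-independent. Only after this paraproduct-type splitting does the paper insert the additional $(\widetilde\xi,\xi_1)$ decomposition (\ref{extraLP}) and expand the $m_1$ symbol in a double Fourier series. The interaction between these two decompositions is where the real work lies: the case analysis of Section~4 (in particular Cases~$1_c''$ and $2$) requires nontrivial Taylor-expansion manipulations to extract decaying factors $2^{-r}$ or $2^{-r/2}$, without which the sums over the Whitney scales $r$ do not converge at all; these are not ``bookkeeping'' that could be absorbed into a generic tree argument. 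Finally, it is worth emphasizing that the mechanism avoiding the $C^d$ blow-up is not $d-1$ well-arranged stopping times but the observation that, with $L^1$-normalized \emph{non-compact} Littlewood--Paley projections, every $L^\infty$ input is disposed of pointwise and trivially, reducing the operator to a minimal $l$-linear one for which the constants depend only on $l$.
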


Assuming for a moment Theorem \ref{th1}, we see immediately that our desired estimate (\ref{3}) follows from it by taking $p_1=p$ and
$p_2 = ... =p_{d+1} =\infty$.

Let us remember now that since $C_d$ is a $(d+1)$- linear operator, it has $(d+1)$ natural adjoints. To define them, recall the definition of the associated $(d+2)$- linear form $\Lambda_d$ given by

\begin{equation}\label{7}
\int_{\R}C_d(f_1, ..., f_{d+1})(x) f_{d+2}(x) d x = \Lambda_d (f_1, ..., f_{d+2}).
\end{equation}
Then, for every $1\leq i\leq d+1$ one defines $C_d^{\ast i}$ by

\begin{equation}\label{8}
\int_{\R}C_d^{\ast i}(f_1, ..., f_{i-1}, f_{i+1}, ..., f_{d+2})(x) f_i(x) dx = \Lambda_d (f_1, ..., f_{d+2}).
\end{equation}
For symmetry, we also use the notation $C_d = C_d^{\ast d+2}$.

To prove Theorem \ref{th1}, we will show that for every $1\leq i\leq d+2$ and for every $\phi_1, ..., \phi_{d+1}$ Schwartz functions, one has

\begin{equation}\label{6}
\|C^{\ast i}_d(\phi_1, ..., \phi_{d+1})\|_p \leq C(d)\cdot C(l)\cdot C(p_1)\cdot ... \cdot C(p_{d+1})\cdot \|\phi_1\|_{p_1}\cdot ... \cdot \|\phi_{p_{d+1}}\|_{d+1},
\end{equation}
where $p_j$ for $1\leq j \leq d+1$ and $p$ are as before.
From this one can immediately extend $C^{\ast i}_d$ by density on an arbitrary product of $L^{p_j}$ spaces, as long as $1<p_j<\infty$ and on $\overline{\S^{\infty}}$ spaces
(the closure of the family of Schwartz functions in $L^{\infty}$) in the case when $p_j = \infty$.

In the next section we will explain how one can then use duality arguments, to define $C^{\ast i}_d$ even further, to generic products of $L^{p_j}$ and $L^{\infty}$ spaces. These duality arguments 
will also clarify
the necessity of proving the wider range of estimates which appear in Theorem \ref{th1} and (\ref{6}) for $C_d$ and its adjoints.

{\bf Acknowledgements:} The author wishes to thank the referees for their careful corrections, which helped to improve the presentation. 
The present work has been partially supported by the NSF.

\section{Duality and the extension from $\overline{\S^{\infty}}$ to $L^{\infty}$}

For any $\# = 0, 1, ..., d$ denote by $S(\#)$ the statement that the inequalities (\ref{6}) for $C_d$ and all its adjoints, extend naturally to the situation when at most $\#$ of the $L^{p_j}$ spaces are equal to $L^{\infty}$ and the rest
are either $\overline{\S^{\infty}}$ or correspond to an index $j$ for which $1<p_j<\infty$. The goal would be to prove that $S(d)$ holds true, assuming $S(0)$ (which we promised to prove later on). To show that
$S(\#)$ implies $S(\#+1)$, let us fix some indices $1<p_1, ..., p_{d+1}\leq \infty$ as in the hypothesis of Theorem \ref{th1}. Since the argument is completely symmetric (in particular all the adjoints can be treated similarly) we can assume without loss
of generality that we want to extend (\ref{6}) for $C_d$, when the first $\#+1$ functions $f_1, ..., f_{\#+1}$ belong to $L^{\infty}$ while the other $\phi_{\#+2}, ..., \phi_{d+1}$ are Schwartz functions.

\subsection*{Case 1: $p>1$}

Since in this case $(L^p)^{\ast} = L^{p'}$ for $1/p+1/p' = 1$, one can simply use duality and define $C_d(f_1, ..., f_{\#+1}, \phi_{\#+2}, ..., \phi_{d+1})$ to be the unique $L^p$ function with the property that

$$\int_{\R}C_d(f_1, ..., f_{\#+1}, \phi_{\#+2}, ..., \phi_{d+1})(x) \phi_{d+2}(x) d x = 
\int_{\R}C_d^{\ast 1}(f_2, ..., f_{\#+1}, \phi_{\#+2}, ..., \phi_{d+1}, \phi_{d+2})(x) f_1(x) d x
$$
for any Schwartz function $\phi_{d+2}$ with $\|\phi_{d+2}\|_{p'} = 1$. This is clearly well defined as a consequence of $S(\#)$ for $C_d^{\ast 1}$.

\subsection*{Case 2: $p=1$}

In this case one has to be a bit careful since this time the dual of $L^1$ is $L^{\infty}$ and the Schwartz functions are no longer dense in it. However, we first observe that since $p=1$, there must be at least two indices $j_1$ and $j_2$ for which 
$1<p_{j_1}, p_{j_2} <\infty$. Again, by the symmetry of the argument, assume that these indices are precisely $\#+2$ and $\#+3$. To define properly $C_d(f_1, ..., f_{\#+1}, \phi_{\#+2}, ..., \phi_{d+1})$ as an element of
$L^1$, we first observe that one can do this as an element of (say) $L^2$, by taking advantage of the fact that all the functions $\phi_j$ are Schwartz and therefore belong to all the $L^q$ spaces simultaneously, for $1<q<\infty$.
Indeed, one can for instance think of
$\phi_{\#+2}, \phi_{\#+3}$ as being in $L^4$ while the rest of $\Phi_j$ all lie in $\overline{\S^{\infty}}$ and then define $C_d(f_1, ..., f_{\#+1}, \phi_{\#+2}, ..., \phi_{d+1})$ as being the unique function in $L^2$ with the property that

$$\int_{\R}C_d(f_1, ..., f_{\#+1}, \phi_{\#+2}, ..., \phi_{d+1})(x) \phi_{d+2}(x) d x = 
\int_{\R}C_d^{\ast 1}(f_2, ..., f_{\#+1}, \phi_{\#+2}, ..., \phi_{d+1}, \phi_{d+2})(x) f_1(x) d x
$$
exactly as before, for any $\phi_{d+2}$ Schwartz function with $\|\phi_{d+2}\|_2 = 1$, since one can rely again on $S(\#)$ for $C_d^{\ast 1}$.

Now that we know that $C_d(f_1, ..., f_{\#+1}, \phi_{\#+2}, ..., \phi_{d+1})$ is a well defined $L^2$ function, we would like to prove that it is in fact in $L^1$, as desired. One can write, for any big $M > 0$

\begin{equation}\label{9}
\int_{-M}^M \left|C_d(f_1, ..., f_{\#+1}, \phi_{\#+2}, ..., \phi_{d+1})(x)\right| d x = \int_{\R}C_d(f_1, ..., f_{\#+1}, \phi_{\#+2}, ..., \phi_{d+1})(x) \widetilde{\chi}_{[-M, M]}(x) d x
\end{equation}
where $|\widetilde{\chi}_{[-M, M]}(x)| = \chi_{[-M, M]}(x)$ almost everywhere.

Pick now a smooth and compactly supported sequence $(f_{d+2}^n)_n$ so that $f_{d+2}^n \rightarrow \widetilde{\chi}_{[-M, M]}(x)$ weakly and so that $\|f_{d+2}^n\|_{\infty} \leq 1$ (one can simply convolve $\widetilde{\chi}_{[-M, M]}$ with a smooth
approximation of identity, to obtain such a sequence). In particular, one can then majorize (\ref{9}) by

$$\lim_n \left|\int_{\R} C_d(f_1, ..., f_{\#+1}, \phi_{\#+2}, ..., \phi_{d+1})(x)  f_{d+2}^n (x) d x \right|\leq$$

$$
\sup_n \left|\int_{\R} C_d^{\ast 1} (f_2, ..., f_{\#+1}, \phi_{\#+2}, ..., \phi_{d+1}, f_{d+2}^n)(x) f_1(x) d x \right|
$$
and since now $f_{d+2}^n \in \overline{\S^{\infty}}$ and $\|f_{d+2}^n\|_{\infty} \leq 1$, one can again use the induction hypothesis to complete the argument.

Also, a careful look at the whole duality procedure, shows that if we assume (\ref{6}) with $C(d)$ growing polynomially, then this will be preserved after replacing all the $\overline{\S^{\infty}}$ by the corresponding $L^{\infty}$.

We are thus left with proving (\ref{6}) for $C_d$ and its adjoints. The advantage of it is that when applied to Schwartz functions, all the operators $C_d^{\ast i}$ for $1\leq i\leq d+2$,
are given by well defined expressions similar to (\ref{4}). Later on, they will be furher decomposed and discretized carefully, and this will allow us to reduce (\ref{6}) even more, 
to some similar estimates, but for finite and well localized {\it model operators}.

\section{Logarithmic estimates and discrete models}

In this section the goal is to describe some logarithmic estimates for certain very concrete discrete model operators, which will play an important role in proving the desired (\ref{6}).

In order to motivate them, and also to get a general idea of the strategy of the proof, let us assume for simplicity that instead of (\ref{6}), one would like to prove 
$L^p\times L^{\infty}\times ... \times L^{\infty} \rightarrow L^p$ estimates (say) for a generic paraproduct $\Pi_{d+1}(f_1, ... , f_{d+1})$ whose $(d+2)$- linear form is given by

\begin{equation}\label{11}
\int_{\R} \sum_k (f_1\ast \Phi^1_k)(x) ... (f_{d+2}\ast \Phi^{d+2}_k)(x) d x
\end{equation}
where $f_1\in L^p$, $f_j\in L^{\infty}$ for $2\leq j\leq d+1$, while $f_{d+2}\in L^{p'}$ with $1/p+1/p' = 1$.

As  usual, all the functions $(\Phi^j_k)_k$ are smooth $L^1$ normalized bumps, adapted to intervals of the form $[-2^{-k}, 2^{-k}]$ for $k\in\Z$, and for at least two indices $j_1, j_2$ one has
$\int_{\R}\Phi^{j_1}_k(x) d x = \int_{\R}\Phi^{j_2}_k(x) d x    = 0$ (in which case we say that their corresponding families are of {\it $\Psi$ type}, while the others are of {\it $\Phi$ type}).

We witness several situations.

\subsection*{Case A: $j_1=1$ and $j_2 = d+2$}

Let us also assume that the $L^1$ norms of the functions in the {\it $\Phi$ families} are not only uniformly bounded, but they are bounded by $1$. In particular, for any $2\leq j\leq d+1$
(in this case) one has

\begin{equation}\label{unu}
|f_j\ast \Phi^j_k (x) | \leq \|\Phi^j_k\|_1 \cdot \|f_j\|_{\infty} \leq \|f_j\|_{\infty}.
\end{equation}
One can then majorize (\ref{11}) by

$$\prod_{j=2}^{d+1} \|f_j\|_{\infty}\cdot \int_{\R}\sum_k |f_1\ast \Phi^1_k (x)| |f_{d+2}\ast \Phi^{d+2}_k (x)| d x \leq $$

$$\prod_{j=2}^{d+1} \|f_j\|_{\infty} \cdot\int_{\R}
\left(\sum_k |f_1\ast \Phi^1_k (x)|^2\right)^{1/2}\cdot \left(\sum_k |f_{d+2}\ast \Phi^{d+2}_k (x)|^2\right)^{1/2} d x =$$

$$\prod_{j=2}^{d+1} \|f_j\|_{\infty} \cdot \int_{\R} S(f_1)(x) S(f_{d+2})(x) d x \leq$$

$$\prod_{j=2}^{d+1} \|f_j\|_{\infty}\cdot      
\|S(f_1)\|_p \cdot \|S(f_{d+2})\|_{p'} \lesssim \prod_{j=2}^{d+1} \|f_j\|_{\infty} \cdot \|f_1\|_p \cdot \|f_{d+2}\|_{p'}
$$
as desired, by using the fact that the Littlewood-Paley square function $S$ is a bounded operator on any $L^q$ space, for $1< q < \infty$.

\subsection*{Case B: $j_1 = 1$ and $j_2 = 2$}

This case will not be that simple. This time, one can only majorize (\ref{11}) by

$$\prod_{j=3}^{d+1} \|f_j\|_{\infty}\cdot \int_{\R}\sum_k |f_1\ast \Phi^1_k (x)| |f_2\ast \Phi^2_k (x)| |f_{d+2}\ast \Phi^{d+2}_k (x)| d x \leq $$

$$\prod_{j=3}^{d+1} \|f_j\|_{\infty} \cdot\int_{\R} \left(\sum_k |f_1\ast \Phi^1_k (x)|^2\right)^{1/2}\cdot \left(\sum_k |f_2\ast \Phi^2_k (x)|^2\right)^{1/2} \cdot \left( \sup_k |f_{d+2}\ast \Phi^{d+2}_k (x)|\right)   d x =$$

$$\prod_{j=3}^{d+1} \|f_j\|_{\infty} \cdot \int_{\R} S(f_1)(x) S(f_2)(x) M(f_{d+2})(x)  d x \leq$$

$$\prod_{j=3}^{d+1} \|f_j\|_{\infty} \cdot \|S(f_1)\|_{s_1}\cdot \|S(f_2)\|_{s_2}\cdot \|M(f_{d+2})\|_{s'_3}\lesssim $$

$$\prod_{j=3}^{d+1} \|f_j\|_{\infty} \cdot \|f_1\|_{s_1}\cdot \|f_2\|_{s_2}\cdot \|f_{d+2}\|_{s'_3},$$
for any $1< s_1, s_2, s_3 <\infty$ so that $1/s_1 + 1/s_2 = 1/s_3$, 
by using the fact that besides $S$, the Hardy-Littlewood maximal function $M$ is also bounded on any $L^q$ space, for $1< q <\infty$.
Clearly, the estimate we are looking for corresponds to $s_1 = s_3 = p$ and $s_2 = \infty$ and it cannot be obtained in this way, since $S$ is unbounded on $L^{\infty}$.

If on the other hand one freezes the functions $f_3, ..., f_{d+1}$, expression (\ref{11}) becomes a $3$- linear form and the above estimates show that its associated bilinear operator
$\Pi_{2}(f_1, f_2)$ is bounded from $L^{s_1}\times L^{s_2}$ into $L^{s_3}$. By symmetry, the same is true for both $\Pi_2^{\ast 1}$ and $\Pi_2^{\ast 2}$. The estimate we are interested in,
can then be rephrased as 

\begin{equation}\label{00}
\Pi_2 : L^p \times L^{\infty} \rightarrow L^p.
\end{equation}
To get it, one needs besides the previous Banach estimates to prove quasi-Banach estimates as well, of the form $\Pi_2^{\ast 2}: L^{r_1}\times L^{r_2} \rightarrow L^{r_3}$, for any
$1< r_1, r_2 <\infty$, $0< r_3 < \infty$ with $1/r_1 + 1/r_2 = 1/r_3$. In the case of paraproducts, there are several ways to achieve that, see for instance \cite{meyerc}. In the end, one can 
use multi-linear interpolation between the Banach and quasi-Banach estimates as in \cite{mtt:general}, to obtain the intermediate (\ref{00}). Even more precisely, the convexity argument in \cite{mtt:general} shows that
there exist two Banach and one quasi-Banach estimates (with implicit boundedness constants $C_B^1$, $C_B^2$, $C_{q-B}$ respectively) so that if one denotes by 
$C_{B} := \max \{C_{B}^1, C_{B}^2, C_{q-B} \}$, one has that this constant represents an upper bound for the boundedness constant of (\ref{00}).
 
\subsection*{Case C: $j_1 = 2$ and $j_2 = 3$}

Finally, we are left with this situation which can be treated similarly to the previous one. More precisely, one can majorize (\ref{11}) this time by

$$\prod_{j=4}^{d+1} \|f_j\|_{\infty}\cdot \int_{\R}\sum_k |f_1\ast \Phi^1_k (x)| |f_2\ast \Phi^2_k (x)| |f_3\ast \Phi^3_k (x)| |f_{d+2}\ast \Phi^{d+2}_k (x)| d x \leq $$

$$\prod_{j=4}^{d+1} \|f_j\|_{\infty} \cdot\int_{\R}
\left(\sum_k |f_2\ast \Phi^2_k (x)|^2\right)^{1/2} \left(\sum_k |f_3\ast \Phi^3_k (x)|^2\right)^{1/2}
\left( \sup_k |f_1\ast \Phi^1_k (x)|\right)
\left( \sup_k |f_{d+2}\ast \Phi^{d+2}_k (x)|\right)   d x =$$

$$\prod_{j=4}^{d+1} \|f_j\|_{\infty} \cdot \int_{\R} S(f_2)(x) S(f_3)(x) M(f_1)(x) M(f_{d+2})(x)  d x \leq$$

$$\prod_{j=4}^{d+1} \|f_j\|_{\infty} \cdot \|M(f_1)\|_{s_1}\cdot \|S(f_2)\|_{s_2}\cdot  \|S(f_3)\|_{s_3}\cdot \|M(f_{d+2})\|_{s'_4}\lesssim $$

$$\prod_{j=4}^{d+1} \|f_j\|_{\infty} \cdot \|f_1\|_{s_1}\cdot \|f_2\|_{s_2}\cdot   \|f_3\|_{s_3}    \|f_{d+2}\|_{s'_4},$$
for any $1< s_1, s_2, s_3 , s_4 <\infty$ so that $1/s_1 + 1/s_2 + 1/s_3= 1/s_4$.

The estimate we are looking for corresponds to $s_1 = s_4 = p$ and $s_2 = s_3 = \infty$ and as before, it cannot be obtained in this way.

This time one freezes the functions $f_4, ..., f_{d+1}$ and then expression (\ref{11}) becomes a $4$- linear form and the estimates above show that its associated $3$- linear operator
$\Pi_{3}(f_1, f_2, f_3)$ is bounded from $L^{s_1}\times L^{s_2} \times L^{s_3}$ into $L^{s_4}$. By symmetry, the same is true for its adjoints $\Pi_3^{\ast 1}$, $\Pi_3^{\ast 2}$ and $\Pi_3^{\ast 3}$.  The estimate we are interested in,
becomes 

\begin{equation}\label{000}
\Pi_3 : L^p \times L^{\infty} \times L^{\infty} \rightarrow L^p.
\end{equation}
As in the previous case, to get it, one needs besides the previous Banach estimates to prove quasi-Banach estimates as well, of the form $\Pi_3^{\ast 2}, \Pi_3^{\ast 3}: L^{r_1}\times L^{r_2} \times L^{r_3} \rightarrow L^{r_4}$, for any
$1< r_1, r_2 , r_3<\infty$, $0< r_4 < \infty$ with $1/r_1 + 1/r_2 + 1/r_3 = 1/r_4$.  And in the end, one can again
use multi-linear interpolation between the Banach and quasi-Banach estimates, to obtain the intermediate (\ref{000}). In particular, the same convexity argument shows that
there exist two Banach and two quasi-Banach estimates (with implicit boundedness constants $C_B^1$, $C_B^2$, $C_{q-B}^1$, $C_{q-B}^2$   respectively) so that if one denotes by 
 $C_{B} := \max \{C_{B}^1, C_{B}^2, C_{q-B}^1, C_{q-B}^2\}$, one has that this constant
is an upper bound for the boundedness constant of (\ref{000}). And this ends the discussion on the boundedness of $\Pi_{d+1}$ from $L^p\times L^{\infty}\times ... \times L^{\infty}$ into $L^p$ since 
by symmetry, it is easy to realize that any other possibility can be reduced to
one of these cases \footnote{It should also be clear that a similar argument works in the general $\Pi_{d+1}: L^{p_1}\times ... \times L^{p_{d+1}} \rightarrow L^p$ case. Instead of the {\it minimal}
bilinear or trilinear operators which appeared before, one would have to deal with $l$- linear ones for some $1\leq l\leq d+1$, but the interpolation between the natural corresponding 
Banach and quasi-Banach estimates, works precisely in the same way. }.
$$$$
There are a couple of important facts that one learns from
the previous argument. First, the bounds are independent of $d$. Responsible for this is the assumption that the $L^1$ norms of the {\it $\Phi$ families} are all at most $1$, which implied
the crucial (\ref{unu}). Then, there is the fact that after using (\ref{unu}) several times, we reduced our analysis to the study of several (Banach or quasi-Banach) corresponding estimates, 
for some  {\it minimal} bilinear or tri-linear operators. 

We claim now that in spite of the fact that $C_d$ is not a Coifman-Meyer operator, it can be studied in an analogous manner. More precisely, one can decompose it first into
polynomially (in $d$) many {\it paraproduct like pieces}, and then estimate each piece independently of $d$. And also as before (since the Banach estimates are easy) we will reduce
the main inequality (via interpolation), to similar quasi-Banach estimates for {\it minimal} $l$- linear operators, for some $1\leq l\leq d+1$. The proof of the precise quasi-Banach estimates
is in general a delicate issue, but it has already been discussed in detail in \cite{camil}.

The necessity to discretize the {\it minimal} $(l+1)$- linear forms justifies the introduction of the following model operators.

Fix then $l$ a positive integer, $\overline{n_1}, ..., \overline{n_l}$ arbitrary integers and consider families 
$(\Phi^1_{I_{\overline{n_1}}})_I$, $(\Phi^2_{I_{\overline{n_2}}})_I$, ... ,$(\Phi^{l}_{I_{\overline{n_{l}}}})_I$ of $L^2$ normalized bumps 
adapted to dyadic intervals
$I_{\overline{n_j}}$ (as in \cite{camil}, given $I$, denote by $I_{\overline{n_j}}$ the interval of the same length as $I$, but sitting $\overline{n_j}$ units of length $|I|$ away from $I$)
so that at least two of them are of {\it $\Psi$ type} (i.e. their integrals are zero) 
By definition, a smooth function $\Phi$ is said to be adapted to an interval $I$, if one has 

$$|\partial^{\alpha}\Phi (x)| \lesssim \frac{1}{|I|^{|\alpha|}}\frac{1}{\left(1 + \frac{\dist (x, I)}{|I|}\right)^{M}}$$ for any derivative $\alpha$ so that
$|\alpha|\leq 5$ and any large $M>0$, with the implicit constants depending on it. Then, also by definition, if $1\leq p\leq \infty$, we say 
that $|I|^{-1/p} \Phi$ is $L^p$ normalized. 

Define the $l$ linear discrete operator $T_{\J}$ for $\J$ a finite family of dyadic intervals, by

\begin{equation}\label{12}
T_{\J}(f_1, ..., f_l) = \sum_{I\in\J}
\frac{1}{|I|^{(l-2)/2}}
\langle f_1, \Phi^1_{I_{\overline{n_1}}}\rangle ... 
\langle f_l, \Phi^l_{I_{\overline{n_l}}}\rangle 
\Phi^{l+1}_I.
\end{equation}
One has

\begin{theorem}\label {th2}
For any such a family $\J$, the $l$-linear operator $T_{\J}$ maps $L^{p_1}\times ... \times L^{p_l} \rightarrow L^p$ for any $1< p_1, ..., p_l < \infty$ with $1/p_1 + ... +1/p_l = 1/p$, $0<p<\infty$, 
with a bound of type

$$O(\log <\overline{n_1}> \cdot ... \cdot \log <\overline{n_l}>).$$
Here, as in \cite{camil}, $<\overline{n_j}>$ simply denotes $2+|\overline{n_j}|$. Also, the implicit constants above are allowed to depend on $l$.
\end{theorem}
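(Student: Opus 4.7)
The plan is to recast Theorem \ref{th2} as an estimate for the dual $(l+1)$--linear form
$$\Lambda_{\J}(f_1,\ldots,f_{l+1}) := \sum_{I\in\J}\frac{1}{|I|^{(l-2)/2}}\langle f_1,\Phi^1_{I_{\overline{n_1}}}\rangle\cdots\langle f_l,\Phi^l_{I_{\overline{n_l}}}\rangle\langle f_{l+1},\Phi^{l+1}_I\rangle,$$
and then to channel the entire dependence on the shifts $\overline{n_j}$ into the shifted Hardy--Littlewood maximal functions $M_{\overline{n}}$ and shifted Littlewood--Paley square functions $S_{\overline{n}}$, both of which by \cite{camil} are bounded on every $L^q$ with $1<q<\infty$ and operator norm $O(\log\langle\overline{n}\rangle)$. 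After a harmless relabeling I may assume that $\Phi^1$ and $\Phi^2$ are the two families of $\Psi$ type.

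For the Banach regime, when all $p_j>1$ and $p\geq 1$, I would pair $T_{\J}$ with $f_{l+1}\in L^{p'}$; after replacing each non--$\Psi$ coefficient $\langle f_j,\Phi^j_{I_{\overline{n_j}}}\rangle$, $3\leq j\leq l$, by the pointwise majorant $|I|^{1/2}\inf_{x\in I}M_{\overline{n_j}}f_j(x)$ and applying Cauchy--Schwarz in $I$ to the two $\Psi$--type factors, $|\Lambda_{\J}|$ is dominated by
$$\int_{\R} S_{\overline{n_1}}f_1(x)\,S_{\overline{n_2}}f_2(x)\,\prod_{j=3}^{l}M_{\overline{n_j}}f_j(x)\,Mf_{l+1}(x)\,dx.$$
H\"older's inequality with exponents $p_1,\ldots,p_l,p'$, together with the logarithmic bounds of \cite{camil}, then delivers exactly the claimed factor $O(\log\langle\overline{n_1}\rangle\cdots\log\langle\overline{n_l}\rangle)$, the unshifted last term contributing only $O(1)$.

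For the quasi--Banach range $0<p<1$ duality is unavailable, so instead I would prove generalized restricted weak--type $(p_1,\ldots,p_l,p')$ estimates and combine them with the Banach ones through multilinear interpolation in the spirit of \cite{mtt:general}, whose convexity preserves the product of logarithmic factors. Concretely, given sets $E_1,\ldots,E_{l+1}$ of finite measure I would extract a major subset $E_{l+1}'\subset E_{l+1}$ and perform a stopping--time decomposition of $\J$ into trees, organized by sizes and energies attached to $S_{\overline{n_1}}, S_{\overline{n_2}}$ and the $M_{\overline{n_j}}$; each tree is estimated via the shifted Bessel--type inequalities of \cite{camil}, while the number of trees at each level is controlled by the corresponding energy. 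The main obstacle lies precisely in this last step: one has to verify that both the tree estimates and the tree--counting inequality lose at most a fixed polynomial power of $\log\langle\overline{n_j}\rangle$ in each coordinate, which is exactly the regime already isolated in \cite{camil}. Granted this, the logarithmic factors propagate cleanly through the tree decomposition and the interpolation, yielding the asserted bound.
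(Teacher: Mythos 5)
Your proposal takes essentially the same route as the paper, which itself offers no independent proof: it simply notes that Theorem~\ref{th2} follows, by scale invariance and multilinear interpolation, from the restricted weak-type inequality (\ref{q-b}) with $f_{l+1}=\chi_{E'}$, whose proof is declared identical to the bilinear Theorem~2 of \cite{camil} (shifted maximal and square functions with $O(\log\langle\overline{n}\rangle)$ operator norms, stopping-time tree decomposition in the spirit of \cite{mptt:biparameter}, \cite{mptt:multiparameter}). Your Banach-case H\"older argument and your restricted weak-type plus interpolation plan for $0<p<1$ reproduce exactly this strategy, with the tree/energy verification you flag being precisely the content delegated to \cite{camil}.
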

This theorem is the $l$- linear generalization of the bilinear Theorem 2 in \cite{camil} and since its proof is identical to the proof of that theorem, we are leaving it to the reader.

More precisely (as in \cite{camil}), Theorem \ref{th2} follows (by scale invariance and interpolation) from the more precise fact 
that for every $f_j \in L^{p_j}$ with $\|f_j\|_{p_j} = 1$,  $1\leq j\leq l$
and measurable set $E \subseteq \R$ of measure $1$, there exists a subset $E'\subseteq E$ of comparable measure so that

\begin{equation}\label{q-b}
\sum_{J\in\J}
\frac{1}{|I|^{(l-1)/2}}
|\langle f_1, \Phi^1_{I_{\overline{n_1}}}\rangle| ...
|\langle f_l, \Phi^l_{I_{\overline{n_l}}}\rangle|
|\langle f_{l+1}, \Phi^{l+1}_I\rangle| \lesssim \log <\overline{n_1}> \cdot ... \cdot \log <\overline{n_l}> ,
\end{equation}
where $f_{l+1} = \chi_{E'}$.

As in \cite{camil}, the fact that one looses only logarithmic bounds in the estimates above, will be important later on.

In the rest of the paper we will describe the calculations that are necessary to show how the desired (\ref{6}) can be indeed reduced to (\ref{q-b}).

\section{Reduction to the discrete model}

We treat the case of $C_d$ only, since by the symmetry of the argument, all its adjoints can be understood in a similar way. Fix then indices $p_j$ for $1\leq j\leq d+1$ and $p$ as in the hypothesis of
Theorem \ref{th1}. As suggested before, the first step is to decompose $C_d$ into polynomially (in $d$) many {\it paraproduct like pieces} which will be analized afterwords.

Here, the clasical Littlewood-Paley decompositions will be of great help. However, since we want to have the {\it perfect inequalities} (\ref{unu}) available, we need to work most of the time with {\it non-compact} (in frequency) approximations of identity, 
which will cause several technical dificulties later on. We define them in detail in the next subsection.

\subsection*{Non-compact Littlewood-Paley $L^1$ normalized projections}

Start with a Schwartz function $\Phi(x)$ which is even, positive and so that $\int_{\R} \Phi(x) d x = 1$. Define also $\Psi(x)$ by

$$\Psi(x) = \Phi(x) - 1/2 \Phi(x/2)$$
and observe that $\int_{\R}\Psi(x) d x  = 0$.
Then, as usual, $\Psi_k(x)$ and $\Phi_k(x)$ denote $2^k\Psi(2^k x)$ and $2^k \Phi(2^k x)$ respectively. Notice that all the $L^1$ norms of $\Phi_k$ are equal to $1$. Observe also that one has

$$\Psi_k(x) = \Phi_k(x) - \Phi_{k-1}(x),$$
and then it is easy to see that

$$\sum_{k\leq \overline{k}}\Psi_k = \Phi_{\overline{k}}$$
and also that

\begin{equation}
\sum_{k\in\Z}\Psi_k = \delta_0
\end{equation}
or equivalently

\begin{equation}\label{13}
\sum_{k\in\Z}\widehat{\Psi_k}(\xi) = 1
\end{equation}
for almost every $\xi\in\R$. On the other hand, 

$$\widehat{\Psi}(0) = \int_{\R}\Psi(x) d x = 1 - 1 = 0.$$
Moreover, one also has that

$$\widehat{\Psi}'(0) = - 2\pi i \int_{\R} x \Psi(x) d x = 0$$
by using the fact that $\Phi$ is an even function. As a consequence, one can write $\widehat{\Psi}(\xi)$ as 

\begin{equation}\label{psi}
\widehat{\Psi}(\xi) = \xi^2 \widehat{\phi}(\xi)
\end{equation}
for another smooth and rapidly decaying function $\phi$.

These are the {\it non-compact} $L^1$ normalized Littlewod-Paley decompositions. The {\it compact} ones are obtained in a similar manner, but one starts instead with a Schwartz function $\Phi$ having the property 
that $\supp \,\widehat{\Phi} \subseteq [-1, 1]$ and $\widehat{\Phi}(\xi) = 1$ on the subinterval $[-1/2, 1/2]$.

\subsection*{Some remarks on the symbols of $C_d$ for $d\geq 2$}

Before going any further, it is worthwhile to have a look at the symbol of the second commutator $C_2$. It is very natural to try to see if its Fourier coefficients satisfy the same 
{\it quadratic estimates} (proved in \cite{camil}) as the symbol of $C_1$. Consider for instance three Schwartz functions $\widehat{\phi}(\xi)$, $\widehat{\phi}(\xi_1)$ and $\widehat{\phi}(\xi_2)$
supported inside the intervals $[-2,-1]$, $[1,2]$ and $[-1/2, 1/2]$ respectively. Clearly, the function

$$(\xi, \xi_1, \xi_2) \rightarrow \widehat{\phi}(\xi) \widehat{\phi}(\xi_1) \widehat{\phi}(\xi_2)$$
is supported inside a Whitney cube (with respect to the origin) in $\R^3$ and the goal is to understand the expression

\begin{equation}\label{f-coef}
\int_{\R^3}
\left[\int_{[0,1]^2} 1_{\R_{+}}(\xi +\alpha\xi_1 +\beta \xi_2) d\alpha d\beta\right]
\widehat{\varphi}(\xi)
\widehat{\varphi}(\xi_1)
\widehat{\phi}(\xi_2)\cdot
\end{equation}

$$
\cdot e^{-2\pi i n\xi}
e^{-2\pi i n_1\xi_1}
e^{-2\pi i n_2\xi_2} d\xi d\xi_1 d\xi_2
$$
when $n, n_1, n_2$ are arbitrary integers. Since $\xi_1$ can never be zero, the symbol in \eqref{f-coef} can be rewritten as

\begin{equation}\label{new-form}
\int_0^1 \frac{1}{\xi_1} \int_0^{\xi_1} 1_{\R_{+}}(\xi +\alpha +\beta \xi_2) d\alpha d\beta.
\end{equation}
When one differentiates \eqref{new-form} with respect to $\xi_1$ the inner term becomes

$$\int_0^1 1_{\R_{+}} (\xi+\xi_1 + \beta \xi_2) d \beta$$
which coincides to $m_1(\xi+\xi_1, \xi_2)$. The important difference now is that since $\xi+\xi_1$ lies inside the interval $[-1,1]$ and $\xi_2$ inside $[-1/2, 1/2]$ and they both contain the origin, one can no longer continue to apply the argument in \cite{camil}.

As a consequence of this particular example, the Fourier coefficients in \eqref{f-coef} seem to decay only {\it linearly}, which is clearly not enough. In any event, these comments show that passing from the
analysis of the first commutator to the analysis of the second one and all the rest, is not at all a {\it routine} task. One should recall that there are ten years difference between the results of Calder\'{o}n \cite{calderon} and the ones of
Coifman and Meyer \cite{meyerc1}.

On the other hand, this also shows that from this point of view at least, the symbol of $C_2$ looks similar to the symbol of the bilinear Hilbert transform (given by $\sgn(\xi_1-\xi_2))$ whose Fourier coefficients decay also only {\it linearly}
as one can easily check. This fact might also be considered as another {\it possible explanation} of why Calder\'{o}n suggested the study of the bilinear Hilbert transform as a step towards understanding all his commutators, besides
the one recalled already in \cite{camil}.

\subsection*{The generic decomposition of $C_d$}

Coming back to our goal, notice first that because of (\ref{4}) if $f, f_1, ..., f_{d+1}$ are all Schwartz functions, one can write the $(d+2)$- linear form $\Lambda_d (f, f_1, ..., f_{d+1})$ associated with $C_d$ as

\begin{equation}\label{form}
\int_{\xi+\xi_1+ ... +\xi_{d+1} = 0}
\left[\int_{[0,1]^d} 1_{\R^{+}}(\xi+\alpha_1\xi_1 + ... +\alpha_d \xi_d) d\alpha_1 ... d\alpha_d \right]
\widehat{f}(\xi)
\widehat{f_1}(\xi_1) ...
\widehat{f_{d+1}}(\xi_{d+1}) d\xi d\xi_1 ... d\xi_{d+1}.
\end{equation}
By combining several Littlewood-Paley decompositions as in (\ref{13}), one can write

\begin{equation}\label{14}
1 = \sum_{k_0, k_1, ..., k_d, k_{d+1} \in \Z}
\widehat{\Psi_{k_0}}(\xi)
\widehat{\Psi_{k_1}}(\xi_1) ... 
\widehat{\Psi_{k_d}}(\xi_d)
\widehat{\Psi_{k_{d+1}}}(\xi_{d+1}).
\end{equation}
Now, for every $(d+2)$- tuple $(k_0, k_1, ..., k_d, k_{d+1}) \in \Z^{d+2}$, one has either $k_0 \geq k_1, ..., k_d, k_{d+1}$ or $k_1\geq k_0, k_2, ..., k_{d+1}$ ... or $k_{d+1}\geq k_0, k_1, ..., k_d$. By replacing some of these inequalities
with {\it strict inequalities}, one can also make sure that the corresponding $d+2$ {\it regions} in $\Z^{d+2}$ are all disjoint.

Fixing always the biggest
parameter and summing over the rest of them, one can rewrite the constant $1$ in (\ref{14}) as\footnote{To be totally rigorous, some of these $\Phi_k$ functions should be $\Phi_{k-1}$, but this is a minor issue.}

$$\sum_k \widehat{\Psi_k}(\xi) \widehat{\Phi_k}(\xi_1) ... \widehat{\Phi_k}(\xi_d) \widehat{\Phi_k}(\xi_{d+1}) +$$

$$ ... $$

\begin{equation}\label{15}
\sum_k \widehat{\Phi_k}(\xi) \widehat{\Phi_k}(\xi_1) ... \widehat{\Phi_k}(\xi_d) \widehat{\Psi_k}(\xi_{d+1}).
\end{equation}
There are $d+2$ {\it inner terms} in the decomposition (\ref{15}) each containing a single $\Psi$ type of a function. 

For some technical reasons that will be clearer later on, we assume that for the $\xi$ and $\xi_{d+1}$ variables we use 
{\it compact} Littlewood-Paley decompositions, while for the rest the {\it non-compact} one in (\ref{13}).

Let us assume now that in addition, one has $\xi +\xi_1 + ... +\xi_{d+1} = 0$. Look at the second (for instance) sum in (\ref{15}) and consider the $k=0$ inner term  which we write for simplicity as

\begin{equation}\label{16}
\widehat{\Phi}(\xi) \widehat{\Psi}(\xi_1) ... \widehat{\Phi}(\xi_d) \widehat{\Phi}(\xi_{d+1}).
\end{equation}
Since from (\ref{psi}) we know that $\widehat{\Psi}(\xi_1) = \xi_1^2 \widehat{\phi}(\xi_1)$, one can rewrite this as

\begin{equation}\label{17}
\widehat{\Psi}(\xi_1) = \xi_1 \widehat{\phi}(\xi_1) (-\xi - \xi_2 - ... -\xi_{d+1}) =
-\xi_1 \xi\widehat{\phi}(\xi_1) - \xi_1\xi_2 \widehat{\phi}(\xi_1) - ... - \xi_1\xi_{d+1}\widehat{\phi}(\xi_1). 
\end{equation}
Using this in (\ref{16}), one can write it as another sum of $O(d)$ terms, containing this time two functions of $\Psi$ type, since besides $\xi_1\widehat{\phi}(\xi_1)$, one obtains in addition either
expressions of type $\xi_j\widehat{\Phi}(\xi_j)$ for $j=2, ..., d+1$, or $\xi \widehat{\Phi}(\xi)$.

If one does this for every scale $k\in\Z$ and every inner term in (\ref{15}), one obtains a decomposition of $1_{\{\xi +\xi_1 + ... +\xi_{d+1} = 0\}}$ as a sum of $O(d^2)$ expressions whose generic inner product terms all
contain two functions of $\Psi$ type, which are more specifically of the form $\gamma\widehat{\phi}(\gamma)$\footnote{And as a consequence, the Schwartz functions whose Fourier transforms are given by such  
expression, have integral zero.}. 
If one inserts this into the formula for the $(d+2)$- linear form (\ref{form}), one obtains
$O(d^2)$ $(d+2)$- linear forms which will be carefully analized next. This is our {\it generic decomposition}. To be able to go further, one needs to understand how to {\it unfold} the symbol of $C_d$.

As before in the case of paraproducts, the positions of the $\Psi$ functions (we denote them by $j_1, j_2$ for $0\leq j_1, j_2 \leq d+1$) will play an important role.  
There are in fact three distinct cases, depending on where these $\Psi$ functions lie.

\subsection*{Case 1: $j_1 = 0$ and $j_2 = 1$}

For symmetry, we change notations and rewrite the $(d+2)$- linear form as

\begin{equation}\label{18}
\sum_k \int_{\xi+\xi_1+ ... +\xi_{d+1} = 0}
[\int_{[0,1]^d} 1_{\R^{+}}(\xi+\alpha_1\xi_1 + ... +\alpha_d \xi_d) d\alpha_1 ... d\alpha_d ]\cdot
\end{equation}

$$
\widehat{\Phi^0_k}(\xi)
\widehat{\Phi^1_k}(\xi_1) ... 
\widehat{\Phi^d_k}(\xi_d)
\widehat{\Phi^{d+1}_k}(\xi_{d+1})\cdot
\widehat{f}(\xi)
\widehat{f_1}(\xi_1) ...
\widehat{f_{d+1}}(\xi_{d+1}) d\xi d\xi_1 ... d\xi_{d+1}
$$
with both families $(\Phi^0_k)_k$ and $(\Phi^1_k)_k$ being of $\Psi$ type. Without the $C_d$ symbol above, the expression (\ref{18}) would be the $(d+2)$- linear form of a paraproduct, which could be analyzed
as we described earlier. The first impulse to deal with it, is to try to decompose it into multiple Fourier series, on the support of the corresponding Whitney {\it frequency boxes}. However,
it is clear that this will produce in the end an upper bound of the type of a product of $O(d)$ power series, which will grow exponentially in $d$ even in the case of classical symbols, so one has to be very careful at this stage. 
The situation is in fact even worse as we pointed out a bit earlier,
since it seems that the Fourier coefficients of the symbol of $C_d$ for $d\geq 2$, do not decay quadratically as the ones of the symbol of $C_1$.

The idea now is to realize that the variable $\xi_1$ (in this case) is in some sense {\it special} and the right thing to do is to look at the $C_d$ symbol as being a multiple average of $C_1$ symbols
(depending on $\xi_1$ and on a new variable $\widetilde{\xi}$), which can be analysed as in \cite{camil}. 

To be able to {\it execute} this plan, since the functions $\widehat{\Phi^j_k}(\xi_j)$ for $1\leq j\leq d$ do not have compact support, one has to insert yet two other compact Littlewood-Paley 
decompositions 
of the unity in (\ref{18}). More precisely, denote
by $\widetilde{\xi} = \xi +\alpha_2\xi_2 + ... +\alpha_d \xi_d$ and write

\begin{equation}\label{extraLP}
1 = \sum_{k_0, k_1} \widehat{\Psi_{k_0}}(\widetilde{\xi}) \widehat{\Psi_{k_1}}(\xi_1) = \sum_{k_0 << k_1} ... + \sum_{k_0 \sim \k_1} ... + \sum_{k_0 >> k_1} ...
\end{equation}
which can be rewritten as before as

\begin{equation}\label{19}
\sum_r \widehat{\Phi_r}(\widetilde{\xi})\widehat{\Psi_r}(\xi_1) + \sum_r \widehat{\Psi_r}(\widetilde{\xi})\widehat{\Psi_r}(\xi_1) + \sum_r \widehat{\Psi_r}(\widetilde{\xi})\widehat{\Phi_r}(\xi_1).
\end{equation}
To be totally rigorous, one should have had $\widehat{\Phi_{r-100}}$ instead of $\widehat{\Phi_r}$ in (\ref{19}) and also finitely many {\it middle $\Psi$ - $\Psi$ terms}, instead of only one. We should keep that
in mind but leave it as that, for the simplicity of our notation.

As a consequence, if we insert (\ref{19}) into (\ref{18}), it splits as a sum of three distinct expressions which will be analized separately. We will denote this cases by $1_a$, $1_b$ and $1_c$ respectively.

\subsection*{Case $1_a$}

To see the effect of the new splitting (over $r$) in (\ref{18}), let's analize for simplicity the particular term corresponding to $k=0$. If we ignore the symbol $\int_{[0,1]^d} 1_{\R^{+}}(\xi+\alpha_1\xi_1 + ... +\alpha_d \xi_d) d\alpha_1 ... d\alpha_d $
for a while, the rest of the expression becomes

$$\sum_r \left[\widehat{\Phi_r}(\widetilde{\xi})\widehat{\Psi_r}(\xi_1)\right]
\widehat{\Phi^0_0}(\xi)
\widehat{\Phi^1_0}(\xi_1) ...
\widehat{\Phi^d_0}(\xi_d)
\widehat{\Phi^{d+1}_0}(\xi_{d+1}) =
$$

\begin{equation}\label{20}
\sum_{r\leq  0} ... + \sum_{r > 0} ...  = 1'_a + 1_a''.
\end{equation}

\subsection*{Case $1_a'$}

Using the fact that $\widehat{\Psi_r}(\xi_1)$ is compactly supported and taking also into account the fact that $\widehat{\Phi^1_0}(\xi_1)$ is also of $\Psi$ type (more precisely, as we have seen, it is of the form $\xi_1\widehat{\phi}(\xi_1)$), 
one can rewrite $1'_a$ as

$$\sum_{r\leq 0}
2^r
\widehat{\Phi_r}(\widetilde{\xi})
\widehat{\Phi^0_0}(\xi)
\widehat{\Psi^1_r}(\xi_1) ... 
\widehat{\Phi^d_0}(\xi_d)
\widehat{\Phi^{d+1}_0}(\xi_{d+1}) = 
$$

$$\sum_{r\leq 0}
2^r
\left[\widehat{\widetilde{\Phi_r}}(\widetilde{\xi})\widehat{\widetilde{\Psi^1_r}}(\xi_1) \right]
\widehat{\Phi^0_0}(\xi)
\widehat{\Psi^1_r}(\xi_1) ... 
\widehat{\Phi^d_0}(\xi_d)
\widehat{\Phi^{d+1}_0}(\xi_{d+1})\widehat{\Phi_r}(\widetilde{\xi})
$$
for certain compactly supported well chosen functions $\widehat{\widetilde{\Phi_r}}(\widetilde{\xi})$, $\widehat{\Psi^1_r}(\xi_1)$ and $\widehat{\widetilde{\Psi^1_r}}(\xi_1)$ (naturally, the first is of $\Phi$ type, while the other two are of $\Psi$ type).

In particular, we can split the symbol 

$$\left[\int_0^1 1_{\R^{+}}(\widetilde{\xi} + \alpha_1\xi_1) d\alpha_1 \right]\widehat{\widetilde{\Phi_r}}(\widetilde{\xi})\widehat{\widetilde{\Psi^1_r}}(\xi_1)$$
as a double Fourier series of the form

\begin{equation}\label{21}
\sum_{n, n_1} C^r_{n, n_1} e^{2 \pi i\frac{n}{2^r}\widetilde{\xi}} e^{2 \pi i \frac{n_1}{2^r}\xi_1},
\end{equation}
where

$$C^r_{n, n_1} =
\frac{1}{2^r}
\frac{1}{2^r}
\int_{\R^2}
\left[\int_0^1 1_{\R^{+}}(\widetilde{\xi} + \alpha_1\xi_1) d\alpha_1 \right]
\widehat{\widetilde{\Phi_r}}(\widetilde{\xi})\widehat{\widetilde{\Psi^1_r}}(\xi_1)
e^{- 2 \pi i\frac{n}{2^r}\widetilde{\xi}} e^{- 2 \pi i \frac{n_1}{2^r}\xi_1} d\widetilde{\xi} d \xi_1 =
$$

$$
\int_{\R^2}
\left[\int_0^1 1_{\R^{+}}(\widetilde{\xi} + \alpha_1\xi_1) d\alpha_1 \right]
\widehat{\widetilde{\Phi_0}}(\widetilde{\xi})\widehat{\widetilde{\Psi^1_0}}(\xi_1)
e^{- 2 \pi i n \widetilde{\xi}} e^{- 2 \pi i n_1 \xi_1} d\widetilde{\xi} d \xi_1
$$
which is an expression independent of $r$.

Recall now from \cite{camil} the crucial fact that

\begin{equation}\label{22}
|C^r_{n, n_1}|  ( = |C_{n, n_1}|) \lesssim \frac{1}{<n>^2} \frac{1}{<n_1>^{\#}}
\end{equation}
for any large number $\# > 0$.

These calculations show that the corresponding contribution of $1'_a$ in (\ref{18}) is

$$\int_0^1 ... \int_0^1
\sum_{r\leq 0}2^r
\sum_{n, n_1} C^r_{n, n_1}\cdot
$$

$$
\int_{\xi+\xi_1 + ... +\xi_d +\xi_{d+1} = 0}
\left[\widehat{\Phi^0_0}(\xi) e^{2 \pi i \frac{n}{2^r}\xi}\right]\cdot
\left[\widehat{\Psi^1_r}(\xi_1) e^{2 \pi i \frac{n_1}{2^r}\xi_1}\right]\cdot
\left[\widehat{\Phi^2_0}(\xi_2) e^{2 \pi i \frac{n\alpha_2}{2^r}\xi_2}\right]\cdot ... \cdot
\left[\widehat{\Phi^d_0}(\xi_d) e^{2 \pi i \frac{n\alpha_d}{2^r}\xi_d}\right]
$$

\begin{equation}\label{23}
\widehat{\Phi^{d+1}_0}(\xi_{d+1})\widehat{\Phi_r}(\widetilde{\xi})
\widehat{f}(\xi)\widehat{f_1}(\xi_1) ... \widehat{f_d}(\xi_d) \widehat{f_{d+1}}(\xi_{d+1}) d \xi d \xi_1 ... d \xi_{d+1} d \alpha_2 ... d\alpha_d.
\end{equation}
If one fixes now $\alpha_2, ..., \alpha_d \in [0,1]$,  $r$ and $n, n_1$, the inner expression becomes

$$
\int_{\xi+\xi_1 + ... +\xi_d +\xi_{d+1} = 0}
[\widehat{f}(\xi)\widehat{\Phi^0_0}(\xi) e^{2 \pi i \frac{n}{2^r}\xi}]\cdot
[ \widehat{f_1}(\xi_1) \widehat{\Psi^1_r}(\xi_1) e^{2 \pi i \frac{n_1}{2^r}\xi_1}]\cdot
$$

\begin{equation}\label{24}
[\widehat{f_2}(\xi_2) \widehat{\Phi^2_0}(\xi_2) e^{2 \pi i \frac{n\alpha_2}{2^r}\xi_2}]\cdot ...\cdot
[\widehat{f_d}(\xi_d) \widehat{\Phi^d_0}(\xi_d) e^{2 \pi i \frac{n\alpha_d}{2^r}\xi_d}]\cdot
[\widehat{f_{d+1}}(\xi_{d+1}) \widehat{\Phi^{d+1}_0}(\xi_{d+1})]\cdot 
\end{equation}

$$
\widehat{\Phi_r}(\xi+\alpha_2\xi_2 + ... + \alpha_d\xi_d ) d \xi d\xi_1 ... d\xi_{d+1}. 
$$
We need now the following

\begin{lemma}
If $F, F_1, ... , F_{d+1}$ and $\Phi$ are Schwartz functions, then one has

\begin{equation}\label{ave1}
\int_{\xi+\xi_1 + ... +\xi_d +\xi_{d+1} = 0}
\widehat{F}(\xi)
\widehat{F_1}(\xi_1) ...
\widehat{F_{d+1}}(\xi_{d+1})
\widehat{\Phi}(a \xi + a_1\xi_1 + ... + a_{d+1}\xi_{d+1}) d\xi d\xi_1 ... d\xi_{d+1} =
\end{equation}

$$
\int_{\R^2}
F(x-a t)
F_1(x - a_1 t) ...
F_{d+1}(x - a_{d+1} t) \Phi(t) d t d x,
$$
for every real numbers $a, a_1, ..., a_{d+1}$.
\end{lemma}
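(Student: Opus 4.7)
The plan is to reduce the lemma to the basic Parseval-type identity
\[
\int_{\eta_0 + \eta_1 + \cdots + \eta_{d+1} = 0} \widehat{G_0}(\eta_0)\widehat{G_1}(\eta_1)\cdots \widehat{G_{d+1}}(\eta_{d+1})\, d\eta_0 \cdots d\eta_{d+1} \;=\; \int_{\R} G_0(x) G_1(x) \cdots G_{d+1}(x)\, dx,
\]
valid for any Schwartz $G_0,\ldots,G_{d+1}$. This identity is just the observation that, interpreting the constrained integral as $\int_{\R^{d+1}} \widehat{G_0}(-\eta_1 - \cdots - \eta_{d+1}) \widehat{G_1}(\eta_1)\cdots \widehat{G_{d+1}}(\eta_{d+1})\, d\eta_1 \cdots d\eta_{d+1}$, its value is the iterated convolution $(\widehat{G_0} \ast \widehat{G_1} \ast \cdots \ast \widehat{G_{d+1}})(0)$, which equals $\widehat{G_0 G_1 \cdots G_{d+1}}(0)$ by the convolution theorem, i.e.\ $\int_\R G_0 G_1 \cdots G_{d+1}\, dx$.

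The first step is to expand the extra factor via the Fourier inversion formula,
\[
\widehat{\Phi}(a\xi + a_1\xi_1 + \cdots + a_{d+1}\xi_{d+1}) \;=\; \int_{\R} \Phi(t)\, e^{-2\pi i t(a\xi + a_1\xi_1 + \cdots + a_{d+1}\xi_{d+1})}\, dt.
\]
Inserting this into the left-hand side of \eqref{ave1} and invoking Fubini (justified because every factor is Schwartz), one may pull the $t$-integral to the outside. What remains inside is
\[
\int_{\xi+\xi_1+\cdots+\xi_{d+1}=0} \bigl[\widehat{F}(\xi)\, e^{-2\pi i a t \xi}\bigr] \prod_{j=1}^{d+1}\bigl[\widehat{F_j}(\xi_j)\, e^{-2\pi i a_j t \xi_j}\bigr]\, d\xi\, d\xi_1 \cdots d\xi_{d+1}.
\]

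The second step is to recognize each bracketed factor as the Fourier transform of a translate: for fixed $t$, the translation identity $\widehat{h(\cdot - b)}(\eta) = e^{-2\pi i b \eta}\widehat{h}(\eta)$ gives $\widehat{F}(\xi) e^{-2\pi i a t \xi} = \widehat{F(\cdot - at)}(\xi)$, and similarly for each $F_j$. Applying the Parseval identity recalled above with $G_0(x) = F(x - at)$ and $G_j(x) = F_j(x - a_j t)$ collapses the inner constrained integral to $\int_\R F(x-at) F_1(x - a_1 t) \cdots F_{d+1}(x - a_{d+1} t)\, dx$. Putting this back into the outer $t$-integral and applying Fubini once more produces the right-hand side of \eqref{ave1}, as desired.

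There is no real obstacle here: the only analytic point is the Fubini exchange and the convolution/Parseval identity, both of which are entirely standard in the Schwartz category. The content of the lemma is essentially bookkeeping, exploiting the translation-invariance of the Fourier transform to move the multiplier $\widehat{\Phi}(a\xi + \sum a_j\xi_j)$ to the physical side as a common integration variable $t$ that translates each $F_j$ by $a_j t$.
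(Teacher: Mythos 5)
Your proof is correct and is in substance the same calculation as the paper's: you expand $\widehat{\Phi}$ via Fourier inversion to introduce the auxiliary variable $t$, and then convert the constrained $\xi$-integral to a physical-space $x$-integral, justifying this conversion via iterated convolution evaluated at $0$ rather than (as the paper does) via the distributional identity $\widehat{\delta_\Gamma}=\delta_{\Gamma^\perp}$ and Plancherel---but these are two phrasings of the identical fact.
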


\begin{proof}
The formula is based on the following fact. If $\Gamma$ is a vector subspace of $\R^{d+2}$ and $\delta_{\Gamma}$ represents the Dirac distribution associated to it and defined by

$$\delta_{\Gamma} ( \phi ) = \int_{\Gamma} \phi(\gamma) d \gamma,$$
for every Schwartz function $\phi$ then, $\widehat{\delta_{\Gamma}} = \delta_{\Gamma^{\perp}}.$ In our case

\begin{equation}\label{gamma}
\Gamma = \{ (\xi, \xi_1, ..., \xi_{d+1}) \in \R^{d+2} : \xi+\xi_1 + ... +\xi_d +\xi_{d+1} = 0 \},
\end{equation}
and as a consequence $\Gamma^{\perp}$ is the one dimensional subspace along the vector $(1, 1, ..., 1)$. Using this and Plancherel, the left hand side of (\ref{ave1}) can be written as

$$\int_{\R}
\widehat{F}(\xi)
\widehat{F_1}(\xi_1) ...
\widehat{F_{d+1}}(\xi_{d+1})
\widehat{\Phi}(a \xi + a_1\xi_1 + ... + a_{d+1}\xi_{d+1})
e^{2\pi i x (\xi+\xi_1 + ... +\xi_{d+1})}  d\xi d\xi_1 ... d\xi_{d+1} d x.
$$
If one adds to it 

$$\widehat{\Phi}(a \xi + a_1\xi_1 + ... + a_{d+1}\xi_{d+1}) = \int_{\R} \Phi(t) e^{ - 2 \pi i t (a \xi + a_1\xi_1 + ... + a_{d+1}\xi_{d+1} )} d t , 
$$
one immediately obtains (\ref{ave1}), by using Fourier's inversion formula several times.
\end{proof}

We record also the following generalization of (\ref{ave1}), which will be used later on as well

\begin{equation}\label{ave2}
\int_{\Gamma}
\widehat{F}(\xi)
\widehat{F_1}(\xi_1) ...
\widehat{F_{d+1}}(\xi_{d+1})
\widehat{\Phi_1}(a \xi + a_1\xi_1 + ... + a_{d+1}\xi_{d+1})
\widehat{\Phi_2}(b \xi + b_1\xi_1 + ... + b_{d+1}\xi_{d+1}) d\xi d\xi_1 ... d\xi_{d+1}=
\end{equation}

$$
\int_{\R^3}
F(x-a t - b s)
F_1(x - a_1 t - b_1 s) ...
F_{d+1}(x - a_{d+1} t - a_{d+1} s ) \Phi_1 (t) \Phi_2 (s) d t d s d x.
$$
Now, if $G$ is an arbitrary Schwartz function and $a $ a real number, we denote by $G^a$ the function defined by

\begin{equation}\label{a}
\widehat{G^a}(\xi) = \widehat{G}(\xi) e^{ 2 \pi i a \xi}.
\end{equation}
Equivalently, one also has $G^a(x) = G(x+a)$.

Using this notation and applying (\ref{ave1}), our previous (\ref{24}) becomes

$$\int_{\R^2}
(f\ast\Phi_0^{0, \frac{n}{2^r}})(x-t)
(f_1\ast\Psi^{1, \frac{n_1}{2^r}}_r)(x)
\prod_{j=2}^d (f_j\ast\Phi_0^{j, \frac{n\alpha_j}{2^r}})(x-\alpha_j t)
(f_{d+1}\ast\Phi^{d+1}_0)(x) \Phi_r(t) d t d x =
$$

$$
\int_{\R^2}
(f\ast\Phi_0^{0, \frac{n}{2^r}})(x-t/2^r)
(f_1\ast\Psi^{1, \frac{n_1}{2^r}}_r)(x)
\prod_{j=2}^d(f_j\ast\Phi_0^{j, \frac{n\alpha_j}{2^r}})(x-\alpha_j t/2^r)
(f_{d+1}\ast\Phi^{d+1}_0)(x) \Phi_0(t) d t d x =
$$

\begin{equation}\label{25}
\int_{\R^2}
(f\ast\Phi_0^{0, \frac{n - t}{2^r}})(x)
(f_1\ast\Psi^{1, \frac{n_1}{2^r}}_r)(x)
\prod_{j=2}^d(f_j\ast\Phi_0^{j, \frac{(n - t)\alpha_j}{2^r}})(x)
(f_{d+1}\ast\Phi^{d+1}_0)(x) \Phi_0(t) d t d x. 
\end{equation}
If one performs a similar decomposition for an arbitrary scale $k\neq 0$ now, the analogous formula of (\ref{25}) becomes

\begin{equation}\label{26}
\int_{\R^2}
(f\ast\Phi_k^{0, \frac{n - t}{2^{r+k}}})(x)
(f_1\ast\Psi^{1, \frac{n_1}{2^{r+k}}}_{r+k})(x)
\prod_{j=2}^d(f_j\ast\Phi_k^{j, \frac{(n - t)\alpha_j}{2^{r+k}}})(x)
(f_{d+1}\ast\Phi^{d+1}_k)(x) \Phi_0(t) d t d x. 
\end{equation}
Summarizing everything, if one denotes by $\vec{\alpha} = (\alpha_2, ..., \alpha_d)$, one sees that the piece of $C_d$ that corresponds to Case $1_a'$, can be written as

\begin{equation}\label{split1}
\int_{[0,1]^{d-1}} \int_{\R}\left(
\sum_{r\leq 0} 2^r \sum_{n, n_1} C^r_{n, n_1} C_d^{r, n, n_1, \vec{\alpha}, t}\right) \Phi_0(t) d t d \vec{\alpha},
\end{equation}
where naturally $C_d^{r, n, n_1, \vec{\alpha}, t}$ is the operator whose $(d+2)$-linear form is given by the sum over $k$ of the corresponding inner expressions in (\ref{26}).

Clearly, in order to prove (\ref{6}) for (\ref{split1}), one would need to prove it for $C_d^{r, n, n_1, \vec{\alpha}, t}$ with upper bounds that are summable over $r, n, n_1$ and integrable
over $t$ and $\vec{\alpha}$. These operators $C_d^{r, n, n_1, \vec{\alpha}, t}$ are essentially paraproducts and for them one can apply the argument described in the previous section.
However, the presence of all of these parameters, have the role to {\it shift its ingredient functions} a little bit, so this time one has to be very precise when evaluates the size of the 
boundedness constants. As before, the idea is to apply the perfect corresponding (\ref{unu}) to all the indices $2\leq j\leq d$ for which $p_j=\infty$. 
This is possible because of the non-compact Littlewood-Paley $L^1$ normalized decompositions that have been used.
Denote by $S$ the set of indices
$2\leq j\leq d$ for which $p_j\neq \infty$. Now, if $l = |S| + 2$ and one freezes as before the $L^{\infty}$ normalized Schwartz functions corresponding to the indices in $\{2, ..., d\} \setminus S$, 
one obtains a {\it minimal} $l$-linear operator denoted by $C_d^{l, r, n, n_1, \vec{\alpha}, t}$.

\subsection*{Banach estimates for $C_d^{l, r, n, n_1, \vec{\alpha}, t}$}

Fix indices $1 < s_1, ..., s_{l+1} <\infty$ so that $1/s_1 + ... + 1/s_l = 1/s_{l+1}$. As in the previous section, the boundedness constants for

\begin{equation}\label{B}
C_d^{l, r, n, n_1, \vec{\alpha}, t} : L^{s_1}\times ... \times L^{s_l} \rightarrow L^{s_{l+1}}
\end{equation}
depend on the boundedness constants of the following two square functions

$$\left(\sum_k \left|f\ast\Phi_k^{0, \frac{n - t}{2^{r+k}}}(x)\right|^2\right)^{1/2} \,\, \text{and}\,\,
\left(\sum_k \left|f_1\ast\Psi^{1, \frac{n_1}{2^{r+k}}}_{r+k}(x)\right|^2\right)^{1/2}$$
and of several maximal functions of type

$$\sup_k \left|f_j\ast\Phi_k^{j, \frac{(n - t)\alpha_j}{2^{r+k}}}(x)\right|,$$
for $j\in S$. 

It is not difficult to see that the square functions are the continuous analogue of the shifted discrete square functions $S^{[\frac{n-t}{2^r}]}$ and 
$S^{n_1}$ of \cite{camil} and as a consequence, they are bounded on every $L^q$ space for $1< q <\infty$, with upper bounds of type $O(\log <\left[\frac{n-t}{2^r}\right] >)$ and
$O(\log <n_1>)$ respectively, see \cite{camil} \footnote{ For every real number $\gamma$, we denote by $[\gamma]$ its integer part.}.

Likewise, the maximal functions are bounded by the shifted maximal functions $M^{[\frac{(n-t)\alpha_j}{2^r}]}$ and therefore bounded on every $L^q$ space for $1< q <\infty$,
with bounds of type $O(\log <\left[\frac{(n-t)\alpha_j}{2^r}\right] >)$, see again \cite{camil}.

Using all these facts, one sees that the boundedness constants of (\ref{B}) are no greater than

\begin{equation}\label{Banach}
 C <r>^l (\log <n>)^l (\log <n_1>)^l (\log <[t]>)^l.
\end{equation}

\subsection*{Quasi-Banach estimates for $C_d^{l, r, n, n_1, \vec{\alpha}, t}$}

Fix indices $1< r_1, ..., r_l <\infty$ and $0<r_{l+1} <\infty$ so that $1/r_1+ ... 1/r_l = 1/r_{l+1}$. We would like to estimate this time the
boundedness constants of

\begin{equation}\label{q-B}
C_d^{l, r, n, n_1, \vec{\alpha}, t} : L^{r_1}\times ... \times L^{r_l} \rightarrow L^{r_{l+1}}
\end{equation}
and its adjoint operators.  To achieve this, we will have to discretize the corresponding (\ref{26}) even further (with respect to the $x$ variable), 
to be able to rewrite the operator $C_d^{l, r, n, n_1, \vec{\alpha}, t}$
in a form similar to (\ref{12}), for which one can apply (\ref{q-b}). One has first to observe that the bump functions corresponding to the index $1$ in (\ref{26}) are adapted to scales which
are $2^{-r}$ times greater than the scales of the bump functions corresponding to the other indices. This fact suggests that the natural thing to do is to discretize using the bigger scale.
On the other hand, one also observes that if a generic function $\Phi$ is a bump adapted to the dyadic interval $J$, and if $J\subseteq \widetilde{J}$ is another dyadic interval
$2^{-r}$ times greater that $J$, then $2^{5r}\Phi$ is a bump adapted to $\widetilde{J}$ as well ($5$ corresponds to the number of derivatives in the definition of {\it adaptedness}).

These facts, together with standard averaging and approximation arguments of \cite{camil} (including Fatou's lemma, etc.) show that our problem can be reduced to estimating expressions of type

\begin{equation}
\frac{1}{2^{6 rl}}
\sum_I \frac{1}{|I|^{(l-1)/2}}
|\langle f ,\Phi^0_{I_{[n-t]}} \rangle|
|\langle f_1, \Phi^1_{I_{n_1}}\rangle|
\prod_{j\in S}
|\langle f_j, \Phi^j_{I_{[(n-t)\alpha_j]}}\rangle|
|\langle f_{d+1}, \Phi^{d+1}_I\rangle|
\end{equation}
where the functions $f$,  $(f_j)_j$ are as in (\ref{q-b}) and $(p_j)_j$ there are the same as our $(r_j)_j$ here \footnote{ The power $6$ above should be read as $5+1$, where $5$ comes from the 
{\it adaptedness} argument and $1$ is a consequence of scaling,
in particular all the bump functions are $L^2$ normalized now.}.  Using (\ref{q-b}) and interpolation, we deduce that the boundedness constants of (\ref{q-B})
are no greater than

\begin{equation}\label{q-Banach}
2^{-6 l  r} (\log <n>)^l (\log <n_1>)^l (\log <[t]>)^l,
\end{equation}
and the same is true for all the adjoints of the operator.

\subsection*{The final interpolation}

Fix now indices $p$, $(p_j)_j$ as in (\ref{6}). Given that the desired estimates are {\it on the edge} of the {\it Banach region}, one can first use convexity arguments and linear interpolation only, to obtain many quasi-Banach
estimates whose bounds do not grow too much with respect to $r$ (at a rate of at most $2^{- \epsilon r}$ say, for some small $\epsilon$). Then, one can use the multilinear interpolation theory from \cite{mtt:general}
and interpolate between these better quasi-Banach estimates and the previous Banach ones in (\ref{Banach}), to realize that (\ref{6}) for $C_d^{r, n, n_1, \vec{\alpha}, t}$ comes with a bound which is acceptable by (\ref{split1}).

This completes the discussion of Case $1'_a$.

The rest of the cases follow a similar strategy. As one could observe, besides the {\it quadratic/logarithmic} argument, the presence of the decaying factor $2^r$ in (\ref{split1}) was also crucial. 
In the remaining of the paper we shall describe the adjustments that one sometimes needs to make in the other cases, in order for the above argument to work.

\subsection*{Case $1_a''$}

The $1''_a$ part, corresponding to $r >0$ is actually simpler, since this time the interaction between $\widehat{\Psi_r}(\xi_1)$ and $\widehat{\Phi^1_0}(\xi_1)$ gives

$$\widehat{\Psi_r}(\xi_1)\widehat{\Phi^1_0}(\xi_1) = \frac{1}{2^{rM}}\widehat{\widetilde{\Psi_r}}(\xi_1)$$
for some large constant $M>0$, where $\widehat{\widetilde{\Psi_r}}(\xi_1)$ is another $\Psi$ function adapted to the same scale as $\widehat{\Psi_r}(\xi_1)$.
This huge decaying factor together with a similar argument as before, solve this case as well.

\subsection*{Case $1_b$.}

This is very similar to $1_a$. In fact, the only difference is that this time the corresponding Fourier coefficients can be estimated by

$$|C_{n,n_1}|\lesssim \frac{1}{<n>^2} \frac{1}{<n-n_1>^{\#}} + \frac{1}{<n>^{\#}} \frac{1}{<n_1>^{\#}}, $$
as shown in \cite{camil} and this still gives a contribution summable over $n, n_1$. 

\subsection*{Case $1_c$.}

Here, one has first to realize that on the support of $\widehat{\Psi_r}(\widetilde{\xi})\widehat{\Phi_r}(\xi_1)$ the symbol $\int_0^1 1_{\R^{+}}(\widetilde{\xi} + \alpha_1\xi_1) d \alpha_1$
behaves like a classical Marcinkiewicz-H\"{o}rmander-Mihlin symbol and as a consequence, one has perfect decay for its Fourier coefficients, of type $\frac{1}{<n>^{\#}}\frac{1}{<n_1>^{\#}}$. 

There are two subcases $1_c'$ and $1_c''$,                                           
which correspond as before to $r< 0$ and $r\geq 0$ respectively.

\subsection*{Case $1_c'$}

In this situation, one just has to observe that

$$\widehat{\Phi_r}(\xi_1) \widehat{\Phi^1_0}(\xi_1) = \widehat{\Phi_r}(\xi_1) \xi_1 \widehat{\phi^1_0}(\xi_1) =
\widehat{\widetilde{\Phi_r}}(\xi_1) \xi_1 = 2^r \widehat{\widetilde{\Phi_r}}(\xi_1)\frac{\xi_1}{2^r} = 2^r \widehat{\widetilde{\Psi_r}}(\xi_1)
$$
where $\widehat{\widetilde{\Psi_r}}(\xi_1)$ is also of $\Psi$ type. The presence of the factor $2^r$ above, shows that this case can be treated exactly as the previous $1'_a$.

\subsection*{Case $1_c''$}

This time one observes that when the two functions  $\widehat{\Phi_r}(\xi_1)$ and $\widehat{\Phi^1_0}(\xi_1)$ interact, there is no decaying factor coming out of this and all one can say is that

$$\widehat{\Phi_r}(\xi_1) \widehat{\Phi^1_0}(\xi_1) = \widehat{\widetilde{\Phi^1_0}}(\xi_1)$$
where $\widehat{\widetilde{\Phi^1_0}}(\xi_1)$ is another $\Psi$ function,
since $\widehat{\Phi^1_0}(\xi_1)$ is adapted on an interval which lies inside the one corresponding to $\widehat{\Phi_r}(\xi_1)$ (recall that $r\geq 0$ now).
To produce a decaying factor, one would have to argue somewhat differently.

We will explain the changes that one has to make again in the $k=0$ case for simplicity, since as usual the argument is scale invariant. Consider the term

\begin{equation}\label{29}
\widehat{\Phi^0_0}(\xi)
\widehat{\Phi^1_0}(\xi_1) ...
\widehat{\Phi^{d+1}_0}(\xi_{d+1}) .
\end{equation}
Recall that $\widehat{\Phi^0_0}(\xi)$ is of $\Psi$ type, but it has also compact support, since the Littlewood-Paley decompositions have been chosen to be compact for the
$0$ and $d+1$ positions. Pick $\widehat{\widetilde{\Phi^0_0}}(\xi)$ another $\Phi$ function, supported on a slightly larger interval and which is equal to $1$
on the support of $\widehat{\Phi^0_0}(\xi)$. Then, split (\ref{29}) as

\begin{equation}\label{30}
\widehat{\Phi^0_0}(\xi)
\widehat{\Phi^1_0}(\xi_1) ...
\widehat{\Phi^{d+1}_0}(\xi_{d+1})\widehat{\widetilde{\Phi^0_0}}(\widetilde{\xi}) +
\end{equation}

\begin{equation}\label{31}
\widehat{\Phi^0_0}(\xi)
\widehat{\Phi^1_0}(\xi_1) ...
\widehat{\Phi^{d+1}_0}(\xi_{d+1})[1 - \widehat{\widetilde{\Phi^0_0}}(\widetilde{\xi})]. 
\end{equation}
The $(d+2)$- linear form determined by (\ref{30}) (and its family of analogs for each scale) can be treated as in the $1''_a$ case and in fact it is even simpler, since this time one has that
$\widehat{\Psi_r}(\widetilde{\xi})\widehat{\widetilde{\Phi^0_0}}(\widetilde{\xi}) = 0$ unless $r = 1, 2, 3$ (say).

To understand (\ref{31}), we rewrite it as

\begin{equation}\label{32}
\widehat{\Phi^0_0}(\xi)
\widehat{\Phi^1_0}(\xi_1) ...
\widehat{\Phi^{d+1}_0}(\xi_{d+1})[\widehat{\widetilde{\Phi^0_0}}(\xi) - \widehat{\widetilde{\Phi^0_0}}(\widetilde{\xi})] =
\end{equation}

$$
- \widehat{\Phi^0_0}(\xi)
\widehat{\Phi^1_0}(\xi_1) ...
\widehat{\Phi^{d+1}_0}(\xi_{d+1})\left[ \int_0^1 \widehat{\widetilde{\Phi^0_0}}'((1-s)\xi + s \widetilde{\xi}) d s \right] (\widetilde{\xi} - \xi) =
$$

$$
- \widehat{\Phi^0_0}(\xi)
\widehat{\Phi^1_0}(\xi_1) ...
\widehat{\Phi^{d+1}_0}(\xi_{d+1})\left[ \int_0^1 \widehat{\widetilde{\Phi^0_0}}'(\xi + s (\alpha_2\xi_2 + ... + \alpha_d \xi_d)) d s \right](\alpha_2\xi_2 + ... + \alpha_d \xi_d ).
$$
At this point we have to realize that we will loose another factor of $d$, because of the paranthesis $(\alpha_2\xi_2 + ... + \alpha_d \xi_d )$. Each of these $O(d)$ expressions
is of the form

\begin{equation}\label{33}
\widehat{\Phi^0_0}(\xi)
\widehat{\Phi^1_0}(\xi_1) ...
\widehat{\Phi^{d+1}_0}(\xi_{d+1})\left[ \int_0^1 \widehat{\widetilde{\Phi^0_0}}'(\xi + s (\alpha_2\xi_2 + ... + \alpha_d \xi_d)) d s \right]
\end{equation}
where, for some $2\leq j \leq d$ one has an extra factor of type $\alpha_j\xi_j$ in addition to the previous $\widehat{\Phi^j_0}(\xi_j)$. Clearly, this just adds another
harmless $\Psi$ function to that $j$ term, so it is enough to analize (\ref{33}). The crucial observation here is to realize that when one hits (\ref{33}) with a factor of type
$\widehat{\Psi_r}(\widetilde{\xi}) \widehat{\Phi_r}(\xi_1)$, one has to have $0\leq s \leq C/2^r$ in order for the corresponding term to be non-zero. This means that one can simply replace the integral 

$$
\int_0^1 \widehat{\widetilde{\Phi^0_0}}'(\xi + s (\alpha_2\xi_2 + ... + \alpha_d \xi_d)) d s 
$$
in (\ref{33}) by

$$
\int_0^{C/2^r} \widehat{\widetilde{\Phi^0_0}}'(\xi + s (\alpha_2\xi_2 + ... + \alpha_d \xi_d)) d s.
$$
Now, for each fixed $r > 0$ and each $0\leq s \leq C/2^r$, the corresponding form can be estimated exactly as before uniformly in $s$ and in the end, after integration, one obtains an upper bound 
summable over $r > 0$. The extra factor $\widehat{\widetilde{\Phi^0_0}}'(\xi + s (\alpha_2\xi_2 + ... + \alpha_d \xi_d))$ is of course harmless, since it just adds another average to the 
generic formula, as can be seen from (\ref{ave2}).

This ends Case $1$.

\subsection*{Case 2: $j_1=0$ and $j_2= d+1$}

The goal here is to show that after some calculations, one can in fact reduce this case to the previous Case $1$. To understand this, consider again a generic $k=0$ term, as the one in
(\ref{29}). One can split it as 

$$
\widehat{\Phi^0_0}(\xi)
\widehat{\phi^1_0}(\xi_1) ...
\widehat{\Phi^{d+1}_0}(\xi_{d+1}) + 
$$

$$
\widehat{\Phi^0_0}(\xi)
\widehat{\psi^1_0}(\xi_1) ...
\widehat{\Phi^{d+1}_0}(\xi_{d+1}) = A + B
$$
where $\widehat{\phi^1_0}(\xi_1)$ is of $\Phi$ type, compactly supported at scale one, while $\widehat{\psi^1_0}(\xi_1)$ is of $\Psi$ type also adapted at scale one. Clearly, the $B$ terms generate 
$(d+2)$- linear forms similar to the ones in Case $1$, and so it
is enough to discuss the $A$ terms only. Here it should not be difficult to realize that by construction, at least one of the two $\Psi$ functions $\widehat{\Phi^0_0}(\xi)$ and $\widehat{\Phi^{d+1}_0}(\xi_{d+1})$, has its
support away from zero. And moreover, we claim that without loss of generality, one can assume that $\widehat{\Phi^{0}_0}(\xi)$ is that function. To see this, one just has to observe that the roles of the variables
$\xi$ and $\xi_{d+1}$ are totally symmetric. Indeed, since $\xi+\xi_1+ ... + \xi_{d+1} = 0$ a simple change of variables shows that

$$\int_{[0,1]^d} 1_{\R_+}(\xi + \alpha_1\xi_1 + ... +\alpha_d \xi_d) d \alpha_1 ... d\alpha_d = \int_{[0,1]^d} 1_{\R_{-}}(\xi_{d+1} + \beta_1\xi_1 + ... +\beta_d \xi_d) d \beta_1 ... d\beta_d$$
which is obviously a similar symbol.

In particular, one can clearly rewrite $A$ as

\begin{equation}\label{34}
\widehat{\Phi^0_0}(\xi)
\widehat{\phi^1_0}(\xi_1) ...
\widehat{\Phi^{d+1}_0}(\xi_{d+1})\widehat{\widetilde{\Psi^{0}_0}}(\xi)
\end{equation}
for another well chosen compactly supported $\Psi$ function $\widehat{\widetilde{\Psi^{0}_0}}$. Then, one rewrites (\ref{34}) further as

$$
\widehat{\Phi^0_0}(\xi)
\widehat{\phi^1_0}(\xi_1) ...
\widehat{\Phi^{d+1}_0}(\xi_{d+1})\widehat{\widetilde{\Psi^{0}_0}}(\widetilde{\xi}) + 
$$

\begin{equation}\label{35}
\widehat{\Phi^0_0}(\xi)
\widehat{\phi^1_0}(\xi_1) ...
\widehat{\Phi^{d+1}_0}(\xi_{d+1})[ \widehat{\widetilde{\Psi^{0}_0}}(\xi ) -   \widehat{\widetilde{\Psi^{0}_0}}(\widetilde{\xi})].
\end{equation}
Then, one can remark that the symbol

$$\int_0^1 1_{\R^{+}}(\widetilde{\xi} + \alpha_1\xi_1) d \alpha_1$$
is a classical symbol on the support of the first term in (\ref{35}) and its analysis becomes simpler. 
In particular, one no longer needs to insert the extra decomposition over $r$ to study it.

We are thus left with the second term of (\ref{35}). Modulo a minus sign, this term can be written as

$$
\widehat{\Phi^0_0}(\xi)
\widehat{\phi^1_0}(\xi_1) ...
\widehat{\Phi^{d+1}_0}(\xi_{d+1})\left[\int_0^1 \widehat{\widetilde{\Psi^{0}_0}}'( (1-s)\xi + s \widetilde{\xi} ) d s \right]\cdot
$$

$$
\cdot (\alpha_2\xi_2 + ... + \alpha_d \xi_d ) = 
$$

$$
\widehat{\Phi^0_0}(\xi)
\widehat{\phi^1_0}(\xi_1) ...
\widehat{\Phi^{d+1}_0}(\xi_{d+1})\left[\int_0^1 \widehat{\widetilde{\Psi^{0}_0}}'( \xi + s ( \alpha_2 \xi_2 + ... + \alpha_d \xi_d)) d s \right]\cdot
$$

$$
\cdot (\alpha_2\xi_2 + ... + \alpha_d \xi_d ).
$$
Then, we decompose this last term further as

\begin{equation}\label{36}
\widehat{\Phi^0_0}(\xi)
\widehat{\phi^1_0}(\xi_1) ...
\widehat{\Phi^{d+1}_0}(\xi_{d+1})\left[ \int_0^1 \widehat{\widetilde{\Psi^{0}_0}}'( \xi + s ( -\xi)) d s \right]\cdot
\end{equation}

$$
\cdot (\alpha_2\xi_2 + ... + \alpha_d \xi_d ) +
$$

\begin{equation}\label{37}
\widehat{\Phi^0_0}(\xi)
\widehat{\phi^1_0}(\xi_1) ...
\widehat{\Phi^{d+1}_0}(\xi_{d+1})\left[ \int_0^1 \int_0^1 \widehat{\widetilde{\Psi^{0}_0}}''( (1-ts)\xi + (1-t) s ( \alpha_2 \xi_2 + ... + \alpha_d \xi_d) )  s d s d t \right]\cdot
\end{equation}

$$
\cdot (\alpha_2\xi_2 + ... + \alpha_d \xi_d ) \cdot \widetilde{\xi}.
$$
We claim now that the term in (\ref{36}) can be reduced to the Case 1 studied earlier. Indeed, notice first that the expression $(\alpha_2\xi_2 + ... + \alpha_d \xi_d)$
contributes $d-1$ terms and consider for instance the one corresponding to $\alpha_2\xi_2$. The presence of $\xi_2$ transforms the bump function depending on
this variable, into one
of $\Psi$ type in an intermediate position (as in Case 1) and then, one just has to observe that the {\it localized Fourier coefficients} of symbols of type

$$
(\widetilde{\widetilde{\xi}}, \xi_2) \rightarrow \int_0^1 \alpha_2 1_{\R_{+}}(\widetilde{\widetilde{\xi}} + \alpha_2 \xi_2) d \alpha_2 
$$
still satisfy the same crucial quadratic estimates that have been proved earlier in \cite{camil}. 

We are then left with the study of the terms in (\ref{37}). From now on (as many times before) we think of the variables $\alpha_2, ..., \alpha_d$ as being
{\it freezed} and of our symbol as being of type 

\begin{equation}\label{simbol}
\int_0^1 1_{\R^{+}}(\widetilde{\xi} + \alpha_1\xi_1) d \alpha_1.
\end{equation}
The variables $\xi_1$ and $\widetilde{\xi}$ are of course {\it special}, but so is $(\alpha_2\xi_2 + ... + \alpha_d \xi_d)$ as it appears quite explicitly in (\ref{37}).
Consider now an extra {\it paraproduct decomposition} of the identity, in the form of finitely many expressions of the type

\begin{equation}\label{pardec}
\sum_r \widehat{\Phi_r} (\xi_1)\widehat{\Phi_r}(\widetilde{\xi})\widehat{\Phi_r} (\alpha_2\xi_2 + ... + \alpha_d \xi_d).
\end{equation}
This can be easily obtained by combining three independent Littlewood-Paley decompositions. It is important to emphasize that at least one
of the above ingredient family of functions must be of $\Psi$ type. As in Case 1, the idea now is to insert this extra decomposition (\ref{pardec}) into (\ref{37}) and study the newly
formed expressions. The support of the function of two variables

$$(\xi_1, \widetilde{\xi}) \rightarrow  \widehat{\Phi_r} (\xi_1) \widehat{\Phi_r} (\widetilde{\xi}) $$
will clearly play an important role, since as long as it is a Whitney square, on it one can decompose the symbol (\ref{simbol}) as a double Fourier series,
precisely as in Case 1. We therefore witness two distinct situations.

\subsection*{The Whitney case}

In this case, the above supports are all Whitney squares with respect to the origin. This means that either $\widehat{\Phi_r}(\xi_1)$ or $\widehat{\Phi_r}(\widetilde{\xi})$
is of $\Psi$ type. It is also useful to observe that since $\xi = \widetilde{\xi} - (\alpha_2\xi_2 + ... + \alpha_d \xi_d)$, it must belong to an interval of size
$2^r$ centered at the origin.  But this means that one must have $r\geq 0$ in order for (\ref{pardec}) to have a nontrivial interaction with (\ref{37}) (remember that
from the beginning, we are in the case when $\widehat{\Phi_0^0}(\xi)$ is of $\Psi$ type and also compactly supported away from the origin).

The case when $\widehat{\Phi_r}(\xi_1)$ is of $\Psi$ type is easier since when (\ref{pardec}) interacts with (\ref{37}) the only non-zero
terms are those corresponding to indices $r$ belonging to the finite set $\{0,1,2\}$. After that, one just applies the method of Case 1. Notice that because of the terms
$(\alpha_2\xi_2 + ... + \alpha_d \xi_d) \cdot \widetilde{\xi}$ in (\ref{37}), one will loose another factor of type $O(d^2)$ (after distributing the inner terms around) which is clearly acceptable.

The other case, when $\widehat{\Phi_r}(\widetilde{\xi})$ is of $\Psi$ type is more complicated, since all the scales $r\geq 0$ can contribute. However, in this case one observes
that when (\ref{pardec}) interacts with (\ref{37}) then one must have either $s$ or $1-t$ smaller than $C/2^{r/2}$ in (\ref{37}) (for a certain fixed but large constant $C>0$).
But then this shows that this case can be treated exactly as the previous Case $1''_c$.

\subsection*{The non-Whitney case}

This case corresponds to the situation when both $\widehat{\Phi_r}(\widetilde{\xi})$  and $\widehat{\Phi_r}(\xi_1)$ are of $\Phi$ type. However, as a consequence 
of (\ref{pardec}), the
support of  $\widehat{\Phi_r} (\alpha_2\xi_2 + ... + \alpha_d \xi_d)$ must be an interval of the same size $2^r$ whose distance to the origin is comparable to its length.
In particular, $\xi = \widetilde{\xi} - (\alpha_2\xi_2 + ... + \alpha_d \xi_d)$ must belong to an interval of a similar kind. But then, because of the presence of 
$\widehat{\Phi_0^0}(\xi)$, one must have $r\sim 0$ in order for (\ref{pardec}) to have a nontrivial interaction with (\ref{37}). So the new term that gets multiplied with
(\ref{37})  in this case, is of the form

\begin{equation}\label{scara0}
\widehat{\Phi_0} (\xi_1)\widehat{\Phi_0}(\widetilde{\xi})\widehat{\Phi_0} (\alpha_2\xi_2 + ... + \alpha_d \xi_d).
\end{equation}
At this point it is important to remember about the factor $\widetilde{\xi}$ in (\ref{37}). When it is multiplied with the above $\widehat{\Phi_0}(\widetilde{\xi})$
it transforms this function into one of a $\Psi$ type, which is clearly very good news. Let us denote it by $\widehat{\Psi_0}(\widetilde{\xi})$ for the rest of the discussion.
We are still not done yet, since this new $\Psi$ type function does not have a support
away from the origin.  However, we can apply to this situation a treatment similar to the one used in the previous Case 1. Before doing that, to summarize, the expression that we face now
consists of a product of a term of the type

\begin{equation}\label{scara00}
\widehat{\Phi_0} (\xi_1)\widehat{\Psi_0}(\widetilde{\xi})\widehat{\Phi_0} (\alpha_2\xi_2 + ... + \alpha_d \xi_d)
\end{equation}
with the previous (\ref{37}) which no longer contains the original factor $\widetilde{\xi}$. At this point, insert another decomposition of the identity, of the type

$$\sum_r \widehat{\Phi_r} (\xi_1)\widehat{\Phi_r}(\widetilde{\xi})$$
where as before either $\widehat{\Phi_r} (\xi_1)$ or $\widehat{\Phi_r}(\widetilde{\xi})$ are of $\Psi$ type. And finally, exactly as in Case 1, observe that when
this new decomposition gets multiplied with the above (\ref{scara00}), the index $r$ must be smaller than zero to obtain nontrivial terms, and also a small factor of the type
$2^r$ jumps out naturally, from the interaction between $\widehat{\Phi_r}(\widetilde{\xi})$ and $\widehat{\Psi_0}(\widetilde{\xi})$. After that, the argument is identical to the one
used before in Case 1.

\subsection*{Case 3: $j_1=2$ and $j_2=3$}

Finally, it is not difficult to see that the Case $3$ can be analyzed as Case $1$, since there are now two $\Psi$ type functions in intermediate positions.

This ends our proof since by symmetry, any other case can be reduced to one of these three.

\section{Generalizations}

To be able to describe and motivate the generalizations we mentioned at the beginning of the paper, we would first like to recall the classical calculations of Calder\'{o}n, which gave rise to his
commutators.

\subsection*{Calculus with functions of linear growth}

Let $A(x)$ be a complex valued function of one real variable having {\it linear growth}, more precisely satisfying $A'\in L^{\infty}$. We denote by $Hf(x)$ the classical Hilbert transform given by

\begin{equation}\label{hilbert}
Hf(x) : = p.v. \int_{\R} f(x-y) \frac{dy}{y}
\end{equation}
and by $Af(x)$ the operator of multiplication with $A(x)$. The question that started the whole theory, was whether the commutator $[H, A]$ was smoothing of order one, or equivalently if $[H,A]\circ D$ maps
$L^p$ into $L^p$ boundedly for $1<p<\infty$, where $Df(x):= f'(x)$\footnote{As a consequence of this, $HA$ could be written as $HA = AH + [H,A]$ and therefore belonged to Calder\'{o}n's algebra.}.

For $f$ smooth and compactly supported, one can write

$$(HA - AH)\circ D (f)(x)= (HA - AH)(f')(x) =$$

$$p.v. \int_{\R} A(x-y) f'(x-y) \frac{dy}{y} - p.v. \int_{\R} A(x) f'(x-y)\frac{dy}{y} =$$

$$p.v. \int_{\R} A(y) f'(y)\frac{1}{x-y} d y - p.v. \int_{\R} A(x) f'(y)\frac{1}{x-y} dy =$$

$$ - p.v. \int_{\R} \frac{A(x) - A(y)}{x-y} f'(y) d y = p.v. \int_{\R}\left( \frac{A(x) - A(y)}{x-y}  \right)' f(y) d y =$$

$$ - p.v. \int_{\R} \frac{A'(y)}{x-y} f(y) d y + p.v. \int_{\R} \frac{A(x) - A(y)}{(x-y)^2} f(y) d y =$$

$$ - H(A' f)(x) + C_1 f(x)$$
where $C_1$ is precisely the first Calder\'{o}n commutator. Since both $f\rightarrow H(A'f)$ and $C_1$ are bounded on $L^p$ for $1<p<\infty$, these show that indeed $[H,A]$ is smoothing of order one.

Besides the commutators, Calder\'{o}n pointed out that more general operators such as

\begin{equation}\label{classic1}
f\rightarrow p.v. \int_{\R} F \left( \frac{A(x) - A(y)}{x-y}  \right) f(y) \frac{dy}{x-y}
\end{equation}
or even

\begin{equation}\label{classic2}
f\rightarrow p.v. \int_{\R} F \left( \frac{A(x) - A(y)}{x-y}  \right)    G \left( \frac{B(x) - B(y)}{x-y}  \right)f(y) \frac{dy}{x-y}
\end{equation}
are worthwile to be studied as well, since they appear naturally in complex analysis or boundary value problems in PDE \footnote{Here, the functions $F, G$ are for instance analytic in a certain disc around the origin, while $\|A'\|_{\infty}$, $\|B'\|_{\infty}$ 
are supposed to be strictly smaller than the radii of convergence of $F$ and $G$ respectively. And of course, one can consider similar operators with more than two factors.  }.

The work of Coifman, McIntosh and Meyer \cite{cmm} proved the desired estimates for all these operators, by reducing them to the previous (\ref{3}).
Finally, let us also remark that the above calculations show that if one denotes by $a:= A'$, then for $g, Dg \in L^p$ with $1<p<\infty$, one has

\begin{equation}
H(A Dg) = A H(Dg) - H(a Dg) + C_1 g.
\end{equation}
This is remarkable since apriorily, there is no direct and obvious way to even define $H(A Dg)$. Notice that on the right hand side, all the compositions make sense.

\subsection*{Calculus with functions of polynomial growth, Part 1}

Suppose now that $A(x)$ is a function having {\it quadratic growth}, more precisely satisfying $A''\in L^{\infty}$. {\it Is it true that the commutator $[H,A]$ is still a smoothing operator} ?
We will see that this time $[H,A]$ is smoothing of order two. 

Indeed, for $f$ a smooth and compactly supported function, given also the previous calculations, one has

$$(HA - AH) \circ D^2 (f)(x) = (HA - AH) \circ D (f')(x) =$$

$$p.v. \int_{\R}\left(\frac{A(x)-A(y)}{(x-y)^2} - \frac{A'(y)}{x-y}\right) f'(y) d y =$$

$$p.v. \int_{\R} \frac{A(x) - A(y) - A'(y)(x-y)}{(x-y)^2} f'(y) d y =$$

$$ - p.v. \int_{\R}\frac{-A'(y) - A''(y)(x-y) + A'(y)}{(x-y)^2} f(y) d y + $$

$$2 p.v. \int_{\R} \frac{A(x) - A(y) - A'(y)(x-y)}{(x-y)^3} f(y) d y = $$

$$ H(A'' f)(x) + 2 p.v. \int_{\R}\left( \frac{A(x) - T_y^1 A(x)}{(x-y)^2}\right) \frac{f(y)}{x-y} d y$$
where

$$T_y^1 A(x) := A(y) + \frac{A'(y)}{1!} (x-y)$$
is the Taylor polynomial of order 1 of the function $A$, about the point $y$. Since $f\rightarrow H(A'' f)$ is clearly a bounded operator, the problem reduces to proving $L^p$ bounds for the linear operator

\begin{equation}\label{taylor1}
f \rightarrow p.v. \int_{\R}\left( \frac{A(x) - T_y^1 A(x)}{(x-y)^2}\right) \frac{f(y)}{x-y} d y.
\end{equation}
For functions of arbitrary {\it polynomial growth} satisfying $A^{(d)}\in L^{\infty}$ for some $d\geq 1$, one can similarly show that $[H,A]$ is smoothing of order $d$, if the operator 

\begin{equation}\label{taylor2}
f \rightarrow p.v. \int_{\R}\left( \frac{A(x) - T_y^{d-1} A(x)}{(x-y)^{d}}\right) \frac{f(y)}{x-y} d y
\end{equation}
satisfies the usual estimates, where $T_y^{d-1} A(x)$ is the Taylor polynomial of order $d-1$ of the function $A$, about the point $y$.

More generally, as before, one can ask if the operators

\begin{equation}\label{taylor3}
f\rightarrow p.v. \int_{\R} F \left( \frac{A(x) - T_y^{d-1} A(x)}{(x-y)^{d}}\right) \frac{f(y)}{x-y} d y
\end{equation}
or even

\begin{equation}\label{taylor4}
f\rightarrow p.v. \int_{\R} G \left( \frac{B(x) - T_y^{d_1-1} B(x)}{(x-y)^{d_1}}\right)  H \left( \frac{C(x) - T_y^{d_2-1} C(x)}{(x-y)^{d_2}}\right) \frac{f(y)}{x-y} d y
\end{equation}
are bounded on $L^p$ for $1<p<\infty$, assuming that $F,G,H$ are analytic as before, while $A^{(d)}, B^{(d_1)}, C^{(d_2)}$ are all in $L^{\infty}$ \footnote{And in fact, even more generally, one can consider
operators having an arbitrary number of similar factors.}. As a consequence of the method in this paper, one can answer all these questions affirmatively. Indeed, it is not difficult to see, thanks to the general averaging formula
for the rest of a Taylor series, that as before, the problem reduces to proving $L^p\times L^{\infty}\times ... \times L^{\infty} \rightarrow L^p$ estimates for the $(k+1)$-linear operator with symbol

$$\frac{1}{(d!)^k}\int_{[0,1]^k}\sgn (\xi +\alpha_1 \xi_1 + ... + \alpha_k \xi_k) (1-\alpha_1)^d ... (1-\alpha_k)^d d\alpha_1 ... d\alpha_k$$
which grow at most polynomially in $k$. But this symbol is very similar to the symbol of the $k$th Calder\'{o}n commutator and because of this, one can prove the desired estimates similarly. In fact, for $k=1$ one can actually see that

$$\int_0^1 \sgn(\xi + \alpha_1 \xi_1) (1-\alpha_1)^d d\alpha_1$$
is even better than the symbol of the first commutator, since at least along the line $\xi+\xi_1=0$ it becomes smoother, due to the presence of the extra factor $(1-\alpha_1)^d$. In particular, the {\it quadratic estimates} for its Fourier coefficients
are still available.

It is quite likely that the $T1$ theorem of David and Journ\'{e} \cite{dj} can be used to handle the cases when $F,G,H$ are of the form $x^n$ for some positive integer $n$. In fact, for $F(x) = x$ this has been verified in \cite{meyerc}
\footnote{It is also interesting to see in \cite{meyerc} at page 94, another instance where these operators appear naturally.}.
The more general case $G(x)=H(x)=x$ has also been treated directly in \cite{cohen}.

\subsection*{Calculus with functions of polynomial growth, Part 2}

There is an alternative calculation that one can perform to understand the previous question. Let us first observe that $[H,A]$ is smoothing of order two if and only if $[H,A]\circ H$ is smoothing of order two. Then, using the fact that
$H^2 = - I$, one can write

$$(AH-HA)\circ H (f) = AH^2 f - H(AH f) =$$

$$-A f - H(AH f) = \frac{1}{2}\left(H^2 (Af) + A H^2 f - 2 H( A H f)\right).$$
If one ignores the $\frac{1}{2}$ factor, the above paranthesis (calculated at an arbitrary point $x$) becomes

$$p.v. \int_{\R^2} f(x+t+s) A(x+t+s) \frac{dt}{t} \frac{ds}{s} + p.v. \int_{\R^2} f(x+t+s) A(x) \frac{dt}{t} \frac{ds}{s} - $$

$$p.v. \int_{\R^2} f(x+t+s) A(x+t) \frac{dt}{t} \frac{ds}{s} - p.v. \int_{\R^2} f(x+t+s) A(x+s) \frac{dt}{t} \frac{ds}{s} = $$

$$p.v. \int_{\R^2} f(x+t+s)\left( A(x+t+s) - A(x+t) - A(x+s) + A(x)\right) \frac{dt}{t} \frac{ds}{s}.$$
To see if the last expression is smoothing of order two, one has (after integrating by parts with respect to the $t$ variable)

$$p.v. \int_{\R^2} f''(x+t+s)\left( \frac{A(x+t+s) - A(x+t) - A(x+s) + A(x)}{ t s}\right) d t d s = $$

$$ - p.v. \int_{\R^2}f'(x+t+s) \left(\frac{A'(x+t+s) - A'(x+t)}{t s}\right) d t d s + $$

$$ p.v. \int_{\R^2} f'(x+t+s) \left(\frac{A(x+t+s) - A(x+t) - A(x+s) + A(x)}{t^2 s}\right) d t d s : =$$

$$I + II.$$
Integrating by parts with respect to the $s$ variable now, one can rewrite $I$ as

$$p.v. \int_{\R^2}f(x+t+s) A''(x+t+s) \frac{d t}{t} \frac{d s}{s} - $$

$$p.v. \int_{\R^2} f(x+t+s) \frac{A'(x+t+s) - A'(x+s)}{s^2} d s \frac{d t }{t} =$$

$$ - A''(x) f(x) - H(C_{1, A'} f)(x),$$
where $C_{1, A'}$ is the Calder\'{o}n first commutator associated to the Lipschitz function $A'$.

Similarly, one can rewrite $II$ as

$$ - H(C_{1, A'} f)(x) +  p.v. \int_{\R^2} f(x+t+s)\frac{A(x+t+s) - A(x+t) - A(x+s) + A(x)}{ t^2 s ^2} d t d s =  $$

$$ - H(C_{1, A'} f)(x) +     $$

\begin{equation}\label{new}
p.v. \int_{\R^2} f(x+t+s) \left(\frac{\Delta_t}{t}\circ \frac{\Delta_s}{s} A(x)\right) \frac{d t}{t} \frac{d s}{s},
\end{equation}
where in general, $\Delta_h g(x)$ denotes the usual finite difference at scale $h$ given by 

$$\Delta_h g(x) = g(x+h) - g(x).$$
The problem can be therefore reduced to the one of proving $L^p$ estimates for the linear operator in (\ref{new}). The reader may remember it from our previous paper \cite{camil}.
If $A$ has arbitrary {\it polynomial growth} (i.e. $A^{(d)}\in L^{\infty}$ for some $d\geq 1$), then an analogous calculation shows that $[H,A]$ is smoothing of order $d$, if the
operator

\begin{equation}\label{new1}
f\rightarrow p.v. \int_{\R^d} f(x+t_1+ ... + t_d) \left(\frac{\Delta_{t_1}}{t_1}\circ ... \circ \frac{\Delta_{t_d}}{t_d} A(x)\right) \frac{d t_1}{t_1} ... \frac{d t_d}{t_d}
\end{equation}
is $L^p$ bounded.

And then more generally, one can ask the same question about the operator given by the expression

\begin{equation}\label{new2}
p.v. \int_{\R^d} f(x+t_1+ ... + t_d) \,F \left(\frac{\Delta_{t_1}}{t_1}\circ ... \circ \frac{\Delta_{t_d}}{t_d} A(x)\right) \frac{d t_1}{t_1} ... \frac{d t_d}{t_d}
\end{equation}
or by (in the case of two factors)

\begin{equation}\label{new3}
p.v. \int_{\R^d} f(x+t_1+ ... + t_d) \,G \left(\frac{\Delta_{a_1 t_1}}{ t_1}\circ ... \circ \frac{\Delta_{a_d t_d}}{ t_d} B(x)\right) 
H \left(\frac{\Delta_{b_1 t_1}}{ t_1}\circ ... \circ \frac{\Delta_{b_d t_d}}{ t_d} C(x)\right)
\frac{d t_1}{t_1} ... \frac{d t_d}{t_d}
\end{equation}
where $(a_j)_j$, $(b_j)_j$ are all different from zero real numbers and $F,G,H$ analytic functions as before. It is also interesting to compare these formulae with the classical expression (\ref{classic1}), which can be rewritten as

$$
p.v. \int_{\R} f(x+t)  F \left(\frac{\Delta_{t}}{t}A(x)\right) \frac{d t}{t}.
$$
We now claim that essentially without any extra effort, one can prove the desired estimates for all these operators. Indeed, using that

$$
\frac{\Delta_{t_1}}{t_1}\circ ... \circ \frac{\Delta_{t_d}}{t_d} A(x) = \int_0^1 ... \int_0^1 A^{(d)}(x+\alpha_1 t_1 + ... +\alpha_d t_d) d\alpha_1 ... d\alpha_d ,
$$
it is not difficult to see that (\ref{new2}) can be reduced to the study of the $(k+1)$-linear operator with symbol \footnote{Notice that this is precisely the symbol of the $k$th Calder\'{o}n commutator raised to the power $d$ !}

\begin{equation}\label{symbol1}
\left(\int_{[0,1]^k} \sgn (\xi + \alpha_1 \xi_1 + ... + \alpha_k \xi_k) d \alpha_1 ... d \alpha_k\right)^d
\end{equation}
while the more general (\ref{new3}) to $(k+1)$-linear operators whose symbols are products of type

\begin{equation}\label{symbol2}
\prod_{i=1}^d \left(\int_{[0,1]^k} \sgn (\xi + c_1^i\alpha_1 \xi_1 + ... + c_k^i\alpha_k \xi_k) d \alpha_1 ... d \alpha_k\right)
\end{equation}
for various non-zero real numbers $(c_j^i)_{i,j}$. The only fact that needs to be realized at this point is that the method extends naturally to cover {\it product symbols} of type (\ref{symbol1}) and (\ref{symbol2}) as well, since each individual factor
can be decomposed as before, as a Fouries series with Fourier coefficients that decay at least quadratically.
More specifically, the only difference in the argument is that instead of the Littlewood Paley decomposition in (\ref{extraLP}), which works very well in the $d=1$ case, 
one has to consider a product of $d$ such similar decompositions, each naturally corresponding to the factors of (\ref{symbol2})\footnote{For instance, for $d=2$, one takes a product of two $2$-dimensional Littlewood Paley decompositions
of type (\ref{extraLP}), one for the pair of variables $(\xi_1, \widetilde{\xi})$ corresponding to the first factor and another one for the pair of variables $(\xi_1, \widetilde{\widetilde{\xi}})$ corresponding to the second factor.}.

\subsection*{Extended Calder\'{o}n algebra}

Given the previous discussions, it is also natural to consider operators of type

\begin{equation}\label{doi}
f\rightarrow p.v. \int_{\R} f(x+t) F \left(\frac{\Delta_t}{t}\circ \frac{\Delta_t}{t} A(x)\right) \frac{d t}{t}
\end{equation}
or even

\begin{equation}
f\rightarrow p.v. \int_{\R} f(x+t) G \left(\frac{\Delta_t}{t}B(x)\right)  H \left(\frac{\Delta_t}{t}\circ \frac{\Delta_t}{t} C(x)\right)\frac{d t}{t}
\end{equation}
and so on, and ask if they are $L^p$ bounded if $F, G, H$ are analytic and $A'', B', C''\in L^{\infty}$. They can also be treated by the method of this paper. For instance, the study of (\ref{doi})
can be reduced to the study of multilinear operators with symbols of type

$$\int_{[0,1]^{2k}} \sgn (\xi + (\alpha_1 + \beta_1) \xi_1 + ... + (\alpha_k + \beta_k) \xi_k ) d \alpha d \beta$$
which clearly can be analysed in a similar manner. {\it Are they related to Calder\'{o}n's algebra in any way ?} one might ask
\footnote{Generally speaking, we say that an operator belongs to Calder\'{o}n's algebra, if it can be written as
a sum between a classical Fourier integral operator with a well defined symbol $a(x,\xi)$ (such as $A H$ for example) and a smoothing of a certain order term.}.
To be able to answer this, we need to recall the bilinear Hilbert transform \cite {lt1}, \cite{lt2}.

If $\alpha$ is any real number with $\alpha \neq 0, 1$, the bilinear Hilbert transform with parameter $\alpha$ denoted by $BHT_{\alpha}$, is the bilinear operator defined by

\begin{equation}\label{bht}
BHT_{\alpha}(f, g)(x) := p.v. \int_{\R} f(x+t) g(x+\alpha t) \frac{d t}{t}.
\end{equation}
It is known that these operators satisfy many $L^p$ estimates of H\"{o}lder type \cite {lt1}, \cite{lt2}. As a consequence of the results in \cite {lt1}, \cite{lt2} and of the $L^p$ theorem for (\ref{doi}) in the particular case
$F(x) = x$, one obtains by a straightforward calculation that

$$BHT_2 (f, A) (x) - 2 H A f (x) + A H f(x)$$
is a smoothing of order two expression, if $A''\in L^{\infty}$. In particular, given also the earlier calculations with functions of quadratic growth, one has that

$$BHT_2 (f, A) = A H f + \text{smoothing of order $2$ term},$$
which shows that the operator $f\rightarrow BHT_2 (f, A)$ belongs to Calder\'{o}n's algebra \footnote{An even simpler calculation shows that if $A'\in L^{\infty}$, then $BHT_2 (f, A) = A H f$ plus a smoothing of order one term.}.
Also, the parameter $2$ above can be replaced by any other $\alpha$, as a consequence of the fact that the theorem for (\ref{doi}) holds true if one replaces $\frac{\Delta_t}{t}\circ \frac{\Delta_t}{t}$ by 
$\frac{\Delta_{at}}{t}\circ \frac{\Delta_{bt}}{t}$ for some appropriate constants $a, b$.

Finally, let us also remark that one can {\it add derivatives} freely to the general operators in (\ref{new2}) and (\ref{new3}) as well and still obtain bounded on $L^p$ operators. A typical example would be

\begin{equation}
f\rightarrow p.v. \int_{\R^2}
f(x+t+s)
F \left(\frac{\Delta_t}{t}\circ \frac{\Delta_t}{t}\circ \frac{\Delta_s}{s} A(x)\right)               
G \left(\frac{\Delta_t}{t}\circ \frac{\Delta_s}{s}\circ \frac{\Delta_s}{s} \circ  \frac{\Delta_s}{s} B(x)\right)         
\frac{d t}{t} \frac{d s}{s}
\end{equation}
for $A''', B''''\in L^{\infty}$.
The straightforward (by now) details are left to the reader \footnote{And the same is true if one considers the natural generalizations of them, when one may face more than two kernels and more than two factors in the 
corresponding expressions.}.

\subsection*{Circular commutators}

In \cite{camil} we noticed a certain {\it symmetric} bilinear operator which we named {\it circular commutator}. 

We would like to describe its natural trilinear generalization here
\footnote{There are of course many other multilinear generalizations of this, as the reader can imagine.}. Thus, we record the following theorem.

\begin{theorem}
Let $a\neq 0$, $b\neq 0$ and $c\neq 0$ be three fixed real numbers. Consider also three Lipschitz functions $A$, $B$ and $C$. Then, the following expression

$$
p.v. \int_{\R^3}
\left(\frac{\Delta_{a t_1}}{t_1} A(x+t_2)\right)
\left(\frac{\Delta_{b t_2}}{t_2} B(x+t_3)\right)
\left(\frac{\Delta_{c t_3}}{t_3} C(x+t_1)\right)\frac{ d t_1}{t_1}\frac{ d t_2}{t_2}\frac{ d t_3}{t_3}
$$
viewed as a trilinear operator in $A'$, $B'$ and $C'$ maps $L^{p_1}\times L^{p_2} \times L^{p_3}$ into $L^p$ boundedly,
for  every $1<p_1, p_2, p_3\leq \infty$ with $1/p_1+1/p_2+1/p_3 = 1/p$ and $1/2 < p < \infty$.
\end{theorem}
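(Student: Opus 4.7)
The plan is to reduce the trilinear operator to a sum of multilinear multipliers whose symbols are products of three $m_1$-type symbols, and then to adapt the decomposition and interpolation machinery of Sections 3--4 to this situation. First, I use the averaging identity $\frac{\Delta_{ah}}{h}F=a\int_0^1 F'(\cdot+\alpha a h)\,d\alpha$ to rewrite the expression in terms of $a_1=A'$, $a_2=B'$, $a_3=C'$; this gives (up to the constant $abc$) an average over $(\alpha,\beta,\gamma)\in[0,1]^3$ of
\[
p.v.\int_{\R^3} a_1(x+t_2+\alpha a t_1)\,a_2(x+t_3+\beta b t_2)\,a_3(x+t_1+\gamma c t_3)\,\frac{dt_1\,dt_2\,dt_3}{t_1 t_2 t_3}.
\]
Passing to the Fourier side and evaluating the three $t$-integrals via $p.v.\int e^{2\pi i t\eta}\,dt/t=-i\pi\sgn(\eta)$, the associated $4$-linear form acquires the trilinear Fourier multiplier
\[
M(\xi_1,\xi_2,\xi_3)=\int_{[0,1]^3}\sgn(\alpha a\xi_1+\xi_3)\,\sgn(\xi_1+\beta b\xi_2)\,\sgn(\xi_2+\gamma c\xi_3)\,d\alpha\,d\beta\,d\gamma,
\]
a cyclic product of three $m_1$-type symbols, each depending on a distinct pair of frequency variables.

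Next, I adapt the product-symbol strategy described after (\ref{symbol2}). I introduce three independent two-dimensional Littlewood-Paley decompositions of the unity, one in each of the three pairs $(\xi_3,\alpha a\xi_1)$, $(\xi_1,\beta b\xi_2)$, $(\xi_2,\gamma c\xi_3)$, patterned on (\ref{extraLP}). On each Whitney square of a given pair, the corresponding $m_1$-factor admits a double Fourier expansion whose coefficients obey the quadratic decay (\ref{22}) from \cite{camil}. Coupled with the outer Littlewood-Paley partition (\ref{14})--(\ref{15}) implementing the balance $\xi+\xi_1+\xi_2+\xi_3=0$, this produces a finite decomposition into pieces parallel to Cases $1_a$, $1_b$, $1_c$, $2$, $3$ of the present paper. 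Each piece is then reduced, by the generic discretization procedure of Section 4, to finite discrete $3$-linear models of the form (\ref{12}) with $l=3$, for which Theorem \ref{th2} supplies the crucial logarithmic $L^p$ estimates (\ref{q-b}).

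The final $L^p$ boundedness then follows by combining Banach estimates (via shifted maximal and square functions, whose logarithmic losses are absorbed by the quadratic Fourier decay) with the quasi-Banach estimates coming from (\ref{q-b}), and invoking the multilinear interpolation of \cite{mtt:general}. This yields the claimed range $1<p_1,p_2,p_3\leq\infty$, $1/p_1+1/p_2+1/p_3=1/p$, $1/2<p<\infty$; the threshold $1/2$ reflects the BHT-like coupling introduced by any two of the three $m_1$-factors, beyond which the available quasi-Banach estimates no longer suffice.

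The main obstacle is the cyclic coupling: each $\xi_i$ appears in two of the three $m_1$-symbols, so the three Whitney decompositions are not independent and their simultaneous refinement must be organized to avoid exponential book-keeping in the resulting case analysis. In particular, the non-Whitney subcases---when a bilinear factor concentrates at the origin in both of its variables, analogous to Case $1_c''$ and to the non-Whitney part of Case $2$---are the delicate ones: they will require auxiliary Taylor-type expansions in the spirit of (\ref{36})--(\ref{37}) and an additional paraproduct decomposition of the type (\ref{pardec}), so that the missing decay can be extracted from the interplay between the remaining two factors and the overall frequency constraint $\xi+\xi_1+\xi_2+\xi_3=0$. Once this book-keeping is carried out, the rest of the argument is a direct adaptation of Sections 3--4 to the trilinear cyclic setting.
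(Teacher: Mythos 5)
Your proposal follows the strategy the paper sketches (in one line) for this theorem: identify the trilinear multiplier as a cyclic product of $m_1$-type symbols and run the product-symbol machinery---multiple two-dimensional Littlewood--Paley decompositions, the quadratic Fourier-coefficient decay (\ref{22}) from \cite{camil}, reduction to the discrete model estimates (\ref{q-b}), and interpolation between the Banach and quasi-Banach bounds as in \cite{mtt:general}. Your symbol computation is, in fact, more careful than the paper's: the multiplier you obtain, $\int_{[0,1]^3}\sgn(\alpha a\xi_1+\xi_3)\,\sgn(\xi_1+\beta b\xi_2)\,\sgn(\xi_2+\gamma c\xi_3)\,d\alpha\,d\beta\,d\gamma$, is a product of three $m_1$-type factors (one averaging variable per first-order finite difference), whereas the paper records it as a cyclic product of $m_2$'s, which appears to be a slip. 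Your flagging of the cyclic coupling --- each $\xi_i$ enters two of the three factors, so the three two-dimensional Whitney decompositions are not independent --- correctly identifies the one genuine new book-keeping issue the paper glosses over, and the planned remedies (Taylor expansions and auxiliary paraproduct decompositions as in Cases $1_c''$ and $2$, exploiting the constraint $\xi+\xi_1+\xi_2+\xi_3=0$) are the right adaptations of the paper's toolkit for the non-Whitney concentrations.
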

The proof of this theorem uses again the same method. If we assume for simplicity that $a=b=c=1$, then the symbol of the corresponding trilinear operator
is given by following {\it circular product}

$$m_2(\xi_1, \xi_2, \xi_3)\cdot  m_2(\xi_2, \xi_3, \xi_1) \cdot m_2(\xi_3, \xi_1, \xi_2)$$
and such symbols can be clearly treated in the same manner.

\subsection*{$T1$ calculations}

It is very well known that the $T1$ theorem \cite{dj} can handle Calder\'{o}n commutators quite successfully. More precisely, it allows one to reduce the study of $C_k$ to the study of $C_{k-1}$. 
Let us briefly recall the details, in the particular case of the first two commutators.

Assume that $A$ is smooth, compactly supported and $A'\in L^{\infty}$. Rewrite $C_{1,A}$ as

$$C_{1,A}f(x) = p.v. \int_{\R}f(x+t) \frac{A(x+t) - A(x)}{t^2} d t.$$
Then, integrating by parts, one gets

$$C_{1,A} 1 (x) = p.v. \int_{\R} \frac{A(x+t) - A(x)}{t^2} d t =  - p.v. \int_{\R} (A(x+t) - A(x))\left( \frac{1}{t}\right)' d t =$$

$$ p.v. \int_{\R} A'(x+t) \frac{d t}{t} = H(A')(x)$$
which is a $BMO$ function.

Suppose now that $A, B$ are smooth, compactly supported and $A', B'\in L^{\infty}$. As before, rewrite the {\it second commutator}  associated to $A, B$ as

$$C_{2, A, B}f(x) = p.v. \int_{\R} f(x+t) \frac{(A(x+t) - A(x)) (B(x+t) - B(x))}{t^3} d t.$$
Then, one has

$$C_{2, A, B} 1 (x) = p.v. \int_{\R} \frac{(A(x+t) - A(x)) (B(x+t) - B(x))}{t^3} d t = $$

$$-\frac{1}{2} p.v. \int_{\R} (A(x+t) - A(x)) (B(x+t) - B(x)) \left(\frac{1}{t^2}  \right)' d t =$$

$$\frac{1}{2} p.v. \int_{\R} A'(x+t) \frac{B(x+t) - B(x)}{t^2} d t +$$

$$\frac{1}{2} p.v. \int_{\R} B'(x+t) \frac{A(x+t) - A(x)}{t^2} d t =$$

$$\frac{1}{2}  C_{1, B}(A')(x) + \frac{1}{2} C_{1, A}(B')(x)$$
which are both $BMO$ functions. Similar calculations can be performed to more general {\it commutators} of type

\begin{equation}
f \rightarrow \int_{\R}
f(x+t)
\left(\frac{\Delta_{ a_1 t}}{t} A_1(x)\right) ...
\left(\frac{\Delta_{ a_d t}}{t} A_d(x)\right)
\frac{d t}{t}
\end{equation}
for every sequence $(a_j)_j$ of nonzero real numbers.

Even though the operators of the previous sections do not come in a {\it standard form} (in particular, their kernels have a product structure) it is still tempting
to check if there is an analogous $T1$ type  {\it reduction of complexity}, available for them.

Consider this time $A, B$ smooth and compactly supported functions, satisfying $A'', B'' \in L^{\infty}$. The simplest analogue of the first commutator, is the operator $T_{1,A}$ given by

$$T_{1,A}f(x) = p.v.\int_{\R^2} f(x+t+s) 
\left(\frac{\Delta_{a t}}{ t}\circ \frac{\Delta_{b t}}{ t} A(x) \right)
\frac{d t}{t} \frac{d s}{s}$$
for $a, b$ real and nonzero.
One can write

$$T_{1, A} 1(x) =  p.v. \int_{\R^2} \frac{A(x+a t+ b s) - A(x+ a t) - A(x+ b s) + A(x)}{t^2 s^2} d t ds =$$

$$   p.v.\int_{\R^2} (A(x+a t+ b s) - A(x+ a t) - A(x+ b s) + A(x)) \left( \frac{1}{t}\right)' \left( \frac{1}{s}\right)' d t d s =$$

$$p.v. \int_{\R^2} A''(x+ a t+ b s) \frac{1}{t} \frac{1}{s} d t d s = H\circ H(A'')(x) = -A''(x)$$
which is in $L^{\infty}$ and therefore still in $BMO$.

However, as we will see, the problem becomes more complicated and the symmetry gets broken at the next step, when one considers the analogue of the second commutator $T_{2, A, B}$ given by

\begin{equation}\label{t2}
T_{2, A, B} f(x) = p.v. \int_{\R^2} f(x+t+s) 
\left(\frac{\Delta_{a t}}{ t}\circ \frac{\Delta_{b s}}{ s} A(x)\right ) 
\left(\frac{\Delta_{c t}}{ t}\circ \frac{\Delta_{d s}}{ s} B(x)\right) \frac{d t}{t} \frac{d s}{s}
\end{equation}
for $a, b, c, d$ real and nonzero numbers.
This time, one has

$$T_{2, A, B} 1(x) = \frac{1}{4} p.v. \int_{\R^2}(\Delta_{a t}\circ \Delta_{b s} A(x)) (\Delta_{c t}\circ \Delta_{d s} B(x))
\left(\frac{1}{t}\right)''
\left(\frac{1}{s}\right)''
 d t d s .$$
One particular term that appears after integrating by parts, is the one which corresponds to the situation when a pair of $s$ and $t$ derivatives hits the first term and another similar pair hits the second term. The operator obtained in this way is

$$p.v. \int_{\R^2} A''(x+ a t + b s) B''(x + c t + d s) 
\frac{d t}{t}
\frac{d s}{s}
$$
which is clearly of a $BHT_{\alpha, \beta}$ type and it is unlikely that such operators map $L^{\infty}\times L^{\infty}$ into $BMO$. 

Even worse, if instead of (\ref{t2}) one considers its natural generalization with $4$ factors $T_{4, A_1, A_2, A_3, A_4}$, a similar calculation generates the expression

$$p.v. \int_{\R^2}
A_1''(x+ a_1t + b_1s)
A_2''(x+ a_2t + b_2s)
A_3''(x+ a_3t + b_3s)
A_4''(x+ a_4t + b_4s)
\frac{d t}{t}
\frac{d s}{s}
$$
and it is known that for generic choices of $(a_j)_j$ and $(b_j)_j$ this $4$-linear operator does not satisfy any $L^p$ estimates of H\"{o}lder type \cite{camil2}.

\end{document}